\definecolor {processblue}{cmyk}{0.96,0,0,0}
\newtheorem{thm}{Theorem}[section]
\theoremstyle{definition}
\newtheorem{cor}[thm]{Corollary}
\newtheorem{prop}[thm]{Proposition}
\newtheorem{defn}[thm]{Definition}
\newtheorem{rem}[thm]{Remark}
\newtheorem{ex}[thm]{Example}
\numberwithin{equation}{section}
\begin{document}
\title[Some graphs related to submodules of a module]
{Some graphs related to submodules of a module}
\author{Faranak Farshadifar}
\address{Department of Mathematics Education, Farhangian University, P.O. Box 14665-889, Tehran, Iran.}
\email{f.farshadifar@cfu.ac.ir}

\subjclass[2010]{05C25; 13C13}%
\keywords {Graph, prime ideal, prime submodule, second submodule, minimal submodule, maximal submodule}

\begin{abstract}
Let $R$ be a commutative ring with
identity and $M$ be an $R$-module.
In this paper, we introduce and investigate the second submodule intersection graph $SSI(M)$ of $M$ with vertices
are non-zero proper submodules of $M$ and two distinct vertices $N$ and $K$ are adjacent if and only if $N \cap K$ is a second submodule of $M$. Also, we introduce and consider the prime submodule sum graph $PSS(M)$ of $M$ with vertices
are non-zero proper submodules of $M$ and two distinct vertices $N$ and $K$ are adjacent if and only if $N + K$ is a prime submodule of $M$.
\end{abstract}
\maketitle
\section{Introduction}
\noindent
Throughout this paper, $R$ will denote a commutative ring with identity and $\Bbb Z$ will denote the ring of of integers.
Also, "$\subset$" will denote the strict inclusion.

A \emph{graph} $G$ is defined as the pair $(V(G),E(G))$, where $V(G)$ is the set of vertices of $G$ and $E(G)$ is the set of edges of $G$. For two distinct vertices $a$ and $b$ of $V(G)$, the notation $a-b$ means that $a$ and $b$ are adjacent. A graph $G$ is said to be \emph{complete} if $a-b$ for all distinct $a, b\in V(G)$, and $G$ is said to be \emph{empty} if $E(G) =\emptyset$. Note by this definition that a graph may be empty even if $V (G)\not =\emptyset$. An empty graph could also be described as totally disconnected. If $|V (G)|\geq 2$, a \emph{path} from $a$ to $b$ is a series
of adjacent vertices $a-v_1-v_2-...-v_n-b$. The \emph{length of a path} is the number of edges it contains.
A graph $G$ is \emph{connected} if for every pair of distinct vertices $a, b\in V (G)$, there exists a path from $a$ to $b$. If there is a path from $a$ to $b$ with $a, b \in V (G)$,
then the \emph{distance from} $a$ to $b$ is the length of the shortest path from $a$ to $b$ and is denoted by $d(a, b)$. If there is not a path between $a$ and $b$, $d(a, b) = \infty$. The \emph{diameter} of $G$ is diam$(G) = Sup\{d(a,b)| a, b \in V(G)\}$.
A \emph{cycle} is a path that begins and ends at the same vertex in which no edge is repeated, and all vertices other than the starting and ending vertex are distinct. If a graph $G$ has a cycle, the \emph{girth} of $G$ (notated $g(G)$) is defined as the length of the shortest cycle of $G$; otherwise, $g(G) =\infty$.
A graph without any cycles is an \textit{acyclic graph}. A vertex that is adjacent to
every other vertices is said to be a\textit{universal vertex} whereas a vertex with degree
zero is called an \textit{isolated vertex}.

A proper submodule $P$ of an $R$-module $M$ is said to be \textit{prime} if for any
$r \in R$ and $m \in M$ with $rm \in P$, we have $m \in P$ or $r \in (P :_R M)$ \cite{MR183747, Da78}.
A non-zero submodule $S$ of an $R$-module $M$ is said to be \emph{second} if for every element $r$ of $R$ we have either $rS= 0$ or $rS= S$ \cite{Y01}. More information about this class of modules can be found in \cite{FF1404}. Also, an ideal $I$ of $R$ is said to be a \textit{second ideal} if $I$ is a second submodule of the $R$-module $R$.

In \cite{saha2023prime}, the authors introduced and investigate the definition of the \textit{prime ideal sum graph} of $R$, denoted
by $PIS(R)$, which is a graph whose vertices are non-zero proper ideals of $R$ and two distinct
vertices $I$ and $J$ are adjacent if and only if $I + J$ is a prime ideal of $R$. Also, in \cite{FF1405}, the present author
defined and studied the \textit{second ideal intersection graph} of $R$, denoted
by $SII(R)$, which is a graph whose vertices are non-zero proper ideals of $R$ and two distinct
vertices $I$ and $J$ are adjacent if and only if $I \cap J$ is a second ideal of $R$.

Let $M$ be an $R$-module. Motivated by previous studies on the prime ideal sum graph and second ideal intersection graph of the ring $R$, in this paper, we introduce and investigate the second submodule intersection graph $SSI(M)$ of $M$ with vertices
are non-zero proper submodules of $M$ and two distinct vertices $N$ and $K$ are adjacent if and only if $N \cap K$ is a second submodule of $M$. Also, we introduce and consider the prime submodule sum graph $PSS(M)$ of $M$ with vertices
are non-zero proper submodules of $M$ and two distinct vertices $N$ and $K$ are adjacent if and only if $N + K$ is a prime submodule of $M$.
\section{Second submodule intersection graph of a module}
\noindent
\begin{defn}\label{2.1}
Let $M$ be an $R$-module.
The \textit{second submodule intersection graph} of $M$, denoted
by $SSI(M)$, is an undirected simple graph whose vertices are non-zero proper submodules of $M$ and two distinct
vertices $N$ and $K$ are adjacent if and only if $N \cap K$ is a second submodule of $M$.
\end{defn}

\begin{thm}\label{2.2}
Let $M$ be an $R$-module such that every submodule contains a minimal submodule. Then
$SSI(M)$ has a universal vertex if and only if one
of the two statements hold:
\begin{itemize}
\item [(a)] $M$ has exactly one minimal submodule.
\item [(b)] $M$ has exactly two minimal submodules $M_1$ and $M_2$ such that $M_1 + M_2$ is a
maximal submodule and that there is no non-second submodule that properly contained in $M_1+M_2$.
\end{itemize}
\end{thm}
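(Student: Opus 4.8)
The plan is to prove the stated equivalence in the two directions, after recording three elementary facts. First, every minimal submodule is second: if $S$ is minimal and $r \in R$, then $rS \subseteq S$ is a submodule of $S$, so minimality forces $rS = 0$ or $rS = S$. Second, a second submodule is non-zero by definition, so whenever $N \cap K$ is second it is in particular non-zero. Third, combining these, if $N$ is a minimal submodule and $U$ is any submodule with $U \cap N$ second (hence non-zero), then $U \cap N = N$, i.e. $N \subseteq U$. I will also use that a second submodule $S$ has prime annihilator: if $abS = 0$ and $bS \neq 0$, then $bS = S$ and $aS = a(bS) = 0$, so $a \in (0 :_R S)$.

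For sufficiency under (a), let $S$ be the unique minimal submodule; I claim $S$ is universal. Any vertex $N \neq S$ contains a minimal submodule, which must be $S$, so $S \subsetneq N$ and $S \cap N = S$ is second. For sufficiency under (b), I claim $U := M_1 + M_2$ is universal. It is a vertex, being non-zero and (maximal, hence) proper. Given a vertex $N \neq U$, I would split into two cases. If $N \subseteq U$, then $N \cap U = N$ is a non-zero submodule properly contained in $M_1 + M_2$, hence second by hypothesis. If $N \not\subseteq U$, maximality of $U$ gives $N + U = M$, and $N \cap U \subsetneq U$ (otherwise $U \subsetneq N \subsetneq M$ would contradict maximality); moreover $N \cap U \neq 0$ because $N$ contains a minimal submodule, which is $M_1$ or $M_2$ and therefore lies in $U$. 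Thus $N \cap U$ is again a non-zero proper submodule of $M_1 + M_2$, hence second. In either case $U$ is adjacent to $N$. Note that this half needs neither that $U$ is itself second nor that there are exactly two minimal submodules.

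For necessity, suppose $U$ is a universal vertex. By the third preliminary fact $U$ contains every minimal submodule of $M$. If there is exactly one minimal submodule we are in case (a), so assume there are at least two, say $M_1 \neq M_2$, and aim for (b). For distinct minimal $M_i, M_j \subseteq U$ the sum $M_i + M_j = M_i \oplus M_j$ is a non-zero proper submodule contained in $U$; when $M_i + M_j \neq U$, universality forces it to be second, and since its annihilator is $(0 :_R M_i) \cap (0 :_R M_j)$, primeness of the annihilator of a second submodule forces $(0 :_R M_i) = (0 :_R M_j)$. One would then try to show that $M_1 + M_2$ is maximal by testing $U$ against any intermediate vertex $M_1 + M_2 \subsetneq W \subsetneq M$, and finally observe that any non-zero submodule properly contained in $M_1 + M_2$ equals its own intersection with $U$ and so must be second, yielding the last clause of (b).

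The hard part will be exactly this necessity direction, and specifically the claim that there are \emph{precisely} two minimal submodules. The pairwise-sum argument constrains $M_i + M_j$ only when that sum differs from $U$; it gives nothing when \emph{all} pairwise sums of minimal submodules already coincide with $U$. This degenerate case genuinely occurs: if the socle of $M$ is two-dimensional over the residue field $k = R/\mathfrak m$ of a common annihilator $\mathfrak m$, it contains $|k| + 1 \geq 3$ minimal submodules, any two of which span the socle. Concretely, for $R = k[x,y]/(x,y)^2$ and $M = R$, the ideal $\mathfrak m$ is a second submodule each of whose non-zero proper submodules is a (second) line, so $\mathfrak m$ is a universal vertex even though $M$ has $|k|+1$ minimal submodules. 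I therefore expect the robust statement to be that $SSI(M)$ has a universal vertex if and only if either $M$ has a unique minimal submodule, or the sum of all minimal submodules is a maximal submodule with no non-second non-zero proper submodule; reconciling the literal ``exactly two'' of (b) with this phenomenon (by either strengthening the hypotheses or weakening (b) to the socle formulation) is where I would expect to spend the most effort.
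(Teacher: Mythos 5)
Your preliminary facts and both halves of the sufficiency direction are correct, and there you follow essentially the paper's own route (the paper's proof of (b) is the same case analysis, written more tersely: it omits your justification that $N\cap U$ is non-zero and proper in $U$, which is a genuine improvement in care).

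The difficulty you isolate in the necessity direction is not a defect of your attempt but of the theorem: your counterexample is valid, and the statement as printed is false. In $M=R=k[x,y]/(x,y)^2$, since $\mathfrak{m}^2=0$ every $k$-subspace of $\mathfrak{m}=(x,y)$ is an ideal, and every non-zero such subspace $S$ is second ($rS=S$ for $r$ a unit, $rS=0$ for $r\in\mathfrak{m}$); as these subspaces are precisely the non-zero proper ideals, $\mathfrak{m}\cap N=N$ is second for every vertex $N\neq\mathfrak{m}$, so $\mathfrak{m}$ is a universal vertex of $SSI(M)$. But $M$ has $|k|+1\geq 3$ minimal submodules, so neither (a) nor (b) holds, while the hypothesis of the theorem is satisfied because $M$ has finite length. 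The paper's necessity proof breaks exactly where you predicted. Its Case 2 rests on the assertion that ``the sum of two minimal submodules cannot be a second submodule,'' which is false whenever two distinct simple submodules share the same annihilator (your $\mathfrak{m}$ is such a sum); moreover, in that situation $T=M_1+M_2$ can equal the universal vertex $N$ itself, so the asserted non-adjacency is vacuous in any case. Its Case 1 is likewise unjustified: if $M_1+M_2+M_3$ is second, primeness of its annihilator forces only $Ann_R(M_1)=Ann_R(M_2)=Ann_R(M_3)$, not $M_1=M_2=M_3$. So no further work can close your ``hard part''; the statement itself must be changed, and your proposed reformulation --- a universal vertex exists iff $M$ has a unique minimal submodule, or the socle (sum of all minimal submodules) is a maximal submodule all of whose non-zero proper submodules are second --- is exactly what your sufficiency argument proves and what your example satisfies. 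One small correction to your own remark: the (b)-sufficiency argument does use that every minimal submodule of $M$ lies in $U$ (this is what ``exactly two'' provides, via the hypothesis that every submodule contains a minimal one); in the socle formulation that containment is automatic.
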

\begin{proof}
Let (a) hold and $K$ be the minimal submodule of $M$. Then for each submodule $N$ of $M$, we have $N \cap K = K$, which is a second submodule and hence $N$ is adjacent to $K$. Thus, $K$ is a universal vertex.

Let (b) hold and set $N:= M_1 + M_2$. Assume that $H$ is a non-trivial submodule other than $N$
and without loss of generality, let $M_1\subseteq H$. Since by assumption, $N$ is a maximal submodule, we have
$M_1\subseteq N \cap H\subset N$. Now, by assumption, $N\cap H$ is a second submodule of $M$ and so $N$ is a universal vertex.

Conversely, let $SSI(M)$ have a universal vertex, say $N$. If $M$ has a unique minimal
submodule, the proof is done. Now, assume that $M$ has at least three minimal submodules, say
$M_1,M_2$ and $M_3$. Note that $N$ cannot be a minimal submodule as two distinct minimal
submodules are not adjacent. Since $N$ is not a minimal submodule, then it is contains a minimal submodule, say $M_1$. If possible, let $M_2 \not \subseteq N$. Then by the minimality of $M_2$, we have $N \cap M_2= 0$. Thus $N$ is not adjacent to $M_2$, a contradiction. Hence, $M_1+M_2+M_3\subseteq N$. Now, one of the following two cases holds.

\textbf{Case 1.} Let $N \not= M_1 +M_2+M_3$. Since $N\cap (M_1 +M_2+M_3)=M_1 +M_2+M_3$
and $N$ is a universal vertex in $SSI(M)$, we get that $M_1 +M_2+M_3$ is a second submodule of $M$.
Thus $M_1 +M_2+M_3\subseteq (0:_MAnn_R(M_1)Ann_R(M_2)Ann_R(M_3))$ implies that $M_1=M_2=M_3$, which is a contradiction.

\textbf{Case 2.} Let $N = M_1 +M_2+M_3$.  Set $T = M_1+M_2$. Since sum of two minimal
submodules cannot be a second submodule, $N \cap T = T$ is not a second submodule, i.e.  $N$ is not adjacent to $T$, which contradicts
with our assumption that $N$ is a universal vertex. Therefore, $M$ has exactly two
minimal submodules, say $M_1$ and $M_2$. By the same argument as above, we conclude that
$N = M_1 +M_2$. Now, we show that $N$ is a maximal submodule. If possible, let there exists a non-trivial
submodule $K$ of $M$ such that $M_1 +M_2=N\subset K$. Then, as $N$ is a universal vertex, $K \cap N = N = M_1 +M_2$ is a
second submodule of $M$, which is a contradiction. Thus, $N$ is a maximal submodule.
If possible, let $K$ be a non-second submodule such that $M_i \subset K \subset N = M_1 +M_2$, where
$i = 1$ or $2$. But, it follows that $N \cap K = K$, a non-second submodule and hence $N$ is not adjacent to $K$, a
contradiction. Thus, there does not exist such submodule $K$ and the proof is completed.
\end{proof}

An $R$-module $M$ is said to be \textit{coreduced} if $rM=r^2M$ for each $r\in  R$ \cite[Theorem 2.13]{MR3755273}.

For a submodule $N$ of an $R$-module $M$, the \emph{second radical} (or \emph{second socle}) of $N$ is defined  as the sum of all second submodules of $M$, contained in $N$, and it is denoted by $sec(N)$ (or $soc(N)$). In case $N$ does not contain any second submodule, the second radical of $N$ is defined to be $(0)$ \cite{MR3085034, AF11}. If $sec(M)=M$, then $M$ is coreduced \cite[Proposition 2.22]{MR3755273}.

\begin{rem}\label{2.3}
Let $M$ be an $R$-module such that every submodule of $M$ contains a minimal submodule and $M$ is not a coreduced module.
Then $sec(M)$ is adjacent to each second submodule of $M$.
\end{rem}
\begin{proof}
As $M$ is an $R$-module such that every submodule of $M$ contains a minimal submodule and $M$ is not a coreduced module, $sec(M)$ is a non-zero proper submodule of $M$. For each second submodule $S$ of $M$, we have $S\cap sec(M)=S$ is a second submodule of $M$. Hence,  $sec(M)$ is adjacent to each second submodule of $M$.
\end{proof}

\begin{cor}\label{2.4}
Let $M$ be an $R$-module such that every submodule of $M$ contains a minimal submodule and $M$ is not a coreduced module.
Then $sec(M)$ is the only minimal submodule of $M$ if and only if $sec(M)$ is a universal vertex of $SSI(M)$.
\end{cor}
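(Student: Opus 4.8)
The plan is to treat the two implications separately, leaning on Theorem~\ref{2.2} and Remark~\ref{2.3}. First note that the hypotheses make $sec(M)$ a genuine vertex of $SSI(M)$: it is non-zero because $M$ has minimal submodules and each minimal submodule is second, and it is proper because $M$ is not coreduced (if $sec(M)=M$ then $M$ would be coreduced). Two standing facts will be used throughout: every minimal submodule is second and is therefore contained in $sec(M)$, and the sum of two distinct minimal submodules is never second.

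For the forward implication, suppose $sec(M)$ is the only minimal submodule of $M$. Since every non-zero submodule of $M$ contains a minimal submodule and $sec(M)$ is the only one, every vertex $N$ satisfies $sec(M)\subseteq N$, whence $N\cap sec(M)=sec(M)$ is second and $N$ is adjacent to $sec(M)$. Thus $sec(M)$ is a universal vertex; equivalently, this is the situation of Theorem~\ref{2.2}(a) with the unique minimal submodule taken to be $sec(M)$.

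For the converse, suppose $sec(M)$ is a universal vertex. Then $SSI(M)$ has a universal vertex, so Theorem~\ref{2.2} applies and $M$ has either exactly one minimal submodule (case (a)) or exactly two minimal submodules $M_1,M_2$ with $M_1+M_2$ maximal (case (b)). Observe that ``$sec(M)$ is the only minimal submodule'' is equivalent to ``$sec(M)$ is itself a minimal submodule'': since every minimal submodule lies in $sec(M)$, minimality of $sec(M)$ already forces uniqueness. So the converse reduces to proving that $sec(M)$ is a minimal submodule. In case (a), writing $K$ for the unique minimal submodule, one has $K\subseteq sec(M)$, and it remains to upgrade this to $sec(M)=K$, i.e.\ to rule out a second submodule strictly containing $K$; I would expect this to follow from the structure of second submodules with simple socle under the standing hypotheses.

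I expect the real obstacle to be excluding case (b). The natural attack is to take two distinct minimal submodules $M_1\neq M_2$ and examine the vertex $M_1+M_2$: since $M_1+M_2\subseteq sec(M)$ we get $sec(M)\cap(M_1+M_2)=M_1+M_2$, which is not second, so universality of $sec(M)$ is contradicted \emph{provided} $M_1+M_2$ is a vertex distinct from $sec(M)$. The difficulty is exactly the coincidence $sec(M)=M_1+M_2$, which case (b) forces: there $M_1+M_2$ is maximal and $M_1+M_2\subseteq sec(M)\subsetneq M$, so $sec(M)=M_1+M_2$ and the intersection argument has no distinct vertex to exploit. Handling this degenerate coincidence is the crux, and I would expect it to require an additional hypothesis on $M$ beyond non-coreducedness (for instance excluding configuration (b) outright), since without such a restriction $sec(M)$ can be a universal vertex while $M$ carries two distinct minimal submodules.
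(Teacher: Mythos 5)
Your forward implication coincides with what the paper intends (its entire proof is the citation of Theorem~\ref{2.2}), so there is nothing to compare there. The substantive part of your write-up is the last paragraph, and your suspicion is vindicated: the converse is not merely hard to extract from Theorem~\ref{2.2}, it is false as stated, and the failure occurs exactly in your ``degenerate coincidence'' $sec(M)=M_1+M_2$ coming from configuration (b). Concretely, take $M=\Bbb Z_{p^2q}$ as a $\Bbb Z$-module, $p\neq q$ primes. Every submodule contains a minimal submodule ($M$ is finite), and $M$ is not coreduced since $pM\neq p^2M$. The vertices of $SSI(M)$ are the subgroups $H_p,H_q,H_{p^2},H_{pq}$ of orders $p,q,p^2,pq$; the minimal submodules are $H_p$ and $H_q$; the second submodules are exactly $H_p$ and $H_q$, so $sec(M)=H_p+H_q=H_{pq}$. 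Since $H_{pq}\cap H_p=H_p$, $H_{pq}\cap H_q=H_q$ and $H_{pq}\cap H_{p^2}=H_p$ are all second, $sec(M)$ is a universal vertex of $SSI(M)$, yet $M$ has two minimal submodules and $sec(M)$ is not even minimal. This is precisely configuration (b) of Theorem~\ref{2.2} realized: there, non-coreducedness forces $sec(M)=M_1+M_2$ (a maximal submodule admits no room above it short of $M$), and that vertex is universal. So the ``if'' direction of the corollary fails, the paper's one-line derivation from Theorem~\ref{2.2} cannot be repaired as stated, and your proposed fix --- an additional hypothesis excluding configuration (b) --- is the correct one.

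One place where you undersell your own argument: case (a) does not require any unstated ``structure theory of second submodules with simple socle''; it can be closed directly, so the corrected equivalence (with configuration (b) excluded) is provable. Suppose $K$ is the unique minimal submodule, $sec(M)$ is universal, and some second submodule $S$ satisfies $K\subsetneq S$. Then $Ann_R(S)\subsetneq Ann_R(K)$: the inclusion holds because $K\subseteq S$, and it is strict because otherwise $S$ would be a vector space over the field $R/Ann_R(K)$, hence a sum of minimal submodules, hence equal to $K$. Pick $r\in Ann_R(K)\setminus Ann_R(S)$. Multiplication by $r$ is then a surjection of $S$ onto itself (as $S$ is second and $r\notin Ann_R(S)$) with nonzero kernel (it kills $K$), and this forces $(0:_S r)\subsetneq (0:_S r^2)\subsetneq S$: if the two annihilators were equal, writing a nonzero $k\in (0:_S r)$ as $k=rs$ would give $s\in(0:_S r^2)=(0:_S r)$ and so $k=0$. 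Now $N:=(0:_S r^2)$ is not second, because a second submodule killed by $r^2$ is killed by $r$ (its annihilator is prime), whereas $N$ strictly contains $(0:_S r)$. Thus $N$ is a vertex with $K\subsetneq N\subsetneq S\subseteq sec(M)$, so $N\neq sec(M)$ and $N\cap sec(M)=N$ is not second, contradicting universality of $sec(M)$. Hence every second submodule of $M$ equals $K$, so $sec(M)=K$ is the only minimal submodule, which is exactly what is needed in case (a).
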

\begin{proof}
This follows from Theorem \ref{2.2}.
\end{proof}

\begin{thm}\label{2.5}
Let $M$ be an $R$-module such that every submodule of $M$ contains a minimal submodule.
A submodule $N$ of $M$ is an isolated vertex in $SSI(M)$ if and only if $N$ is
a minimal and maximal submodule of $M$.
\end{thm}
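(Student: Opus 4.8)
The plan is to prove both implications directly, with the pivotal observation being that every minimal submodule of $M$ is automatically a second submodule. Indeed, if $S$ is a minimal submodule and $r \in R$, then $rS$ is a submodule of $S$, so minimality forces $rS = 0$ or $rS = S$; thus $S$ is second. I will use this fact repeatedly. I will also use that, by definition, a second submodule is non-zero, so the zero submodule is never second and hence never witnesses an adjacency in $SSI(M)$.

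For the direction $(\Leftarrow)$, suppose $N$ is simultaneously minimal and maximal, and let $K$ be any vertex distinct from $N$. Since $N \cap K \subseteq N$ and $N$ is minimal, we have $N \cap K = 0$ or $N \cap K = N$. The latter forces $N \subseteq K$; but $N$ is maximal and $K$ is a proper submodule with $K \neq N$, so no submodule lies strictly between $N$ and $M$, giving $N \not\subseteq K$. Hence $N \cap K = 0$, which is not second, so $N$ is adjacent to no vertex and is therefore isolated.

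For the direction $(\Rightarrow)$, suppose $N$ is isolated. To see that $N$ is minimal, assume otherwise. By hypothesis every submodule contains a minimal submodule, so $N$ contains a minimal submodule $M_1$; since $N$ is not minimal, $M_1 \subsetneq N$, and $M_1$ is a vertex distinct from $N$. Then $N \cap M_1 = M_1$, which is second by the key observation, making $N$ adjacent to $M_1$ and contradicting isolation; hence $N$ is minimal. To see that $N$ is maximal, assume otherwise, so there is a submodule $K$ with $N \subsetneq K \subsetneq M$; this $K$ is a vertex distinct from $N$. Then $N \cap K = N$, and $N$ is minimal hence second, so $N$ is adjacent to $K$, again contradicting isolation. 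Therefore $N$ is both minimal and maximal.

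The argument is essentially bookkeeping once the key observation is in place; the only point requiring care is the correct use of the definition of a second submodule, specifically that the zero submodule is excluded, so that in the $(\Leftarrow)$ direction the intersection $N \cap K = 0$ genuinely fails to be second. I do not anticipate a serious obstacle, but I would double-check that the standing hypothesis that every submodule contains a minimal submodule is invoked exactly where needed, namely to produce the minimal submodule inside $N$ in the minimality step of $(\Rightarrow)$.
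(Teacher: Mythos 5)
Your proof is correct and follows essentially the same argument as the paper: both directions use the same case analysis (producing a minimal submodule inside $N$ or a submodule strictly above $N$ for the forward direction, and splitting on $N \subseteq K$ versus $N \not\subseteq K$ for the converse), with the only difference being that you spell out the standard fact that minimal submodules are second, which the paper invokes without proof.
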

\begin{proof}
Let $N$ be an isolated vertex in $SSI(M)$. If $N$ is not a minimal submodule, then it is
properly contains a minimal submodule, say $K$ and $N \cap K = K$. Thus $N$ is adjacent to $K$ in $SSI(M)$,
a contradiction, and so $N$ is a minimal submodule. If $N$ is not a maximal submodule, then there
exists a submodule $T$ of $M$ such that $N \subset T \subset M$ and $N \cap T =N$, which is a
second submodule since every minimal submodule of $M$ is a second submodule of $M$. Hence $N$ is adjacent to $T$, a contradiction, and so $N$ is a maximal submodule of $M$.

Conversely, let $N$ be a minimal and maximal submodule of $M$. If possible, assume
that $N$ is not isolated in $SSI(M)$. Then there exists a non-zero proper submodule $T$ of $M$ other than $N$ such that $N \cap T$ is a second submodule. If $N \not \subseteq T$, then as $N$ is minimal submodule of $M$, we
have $N\cap T =0$, which is not a second submodule. On the other hand if $N \subseteq T$, then by maximality
of $N$, we have $T =M$ or $N = T$, a contradiction. Thus such a submodule $T$ does not
exist and hence $N$ is an isolated vertex in $SSI(M)$.
\end{proof}

\begin{thm}\label{2.6}
Let $M$ be an $R$-module such that every submodule of $M$ contains a minimal submodule. Then
we have the following.
\begin{itemize}
\item [(a)] If $SSI(M)$ is a complete graph, then $M$ has exactly one minimal submodule and every non-zero
non-second submodule is a maximal submodule.
\item [(b)] If $M$ has exactly one minimal submodule, then $SSI(M)$ is a complete graph.
\end{itemize}
\end{thm}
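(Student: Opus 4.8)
The plan is to prove the two implications separately, using throughout the fact---already invoked in the proof of Theorem~\ref{2.5}---that every minimal submodule of $M$ is a second submodule.

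For part~(a), I would argue by contradiction in two stages. To see that $M$ has exactly one minimal submodule, suppose it had two distinct ones, $M_1$ and $M_2$. Both are non-zero proper submodules, hence vertices of $SSI(M)$, and completeness forces them to be adjacent; but minimality gives $M_1\cap M_2=0$, which is not a second submodule, a contradiction. Since $M$ itself contains a minimal submodule by hypothesis, exactly one remains. To see that every non-zero proper non-second submodule $N$ is maximal, suppose instead that $N\subset T\subset M$ for some $T$. Then $N$ and $T$ are both vertices, so completeness yields that $N\cap T=N$ is second, contradicting the choice of $N$. This direction should be routine: completeness simply converts each proper inclusion $N\subset T$ into the assertion ``$N$ is second.''

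For part~(b), let $K$ be the unique minimal submodule. Since every non-zero submodule contains a minimal submodule and $K$ is the only candidate, $K$ is contained in every non-zero submodule; hence for any two distinct vertices $N$ and $L$ we get $K\subseteq N\cap L$, so in particular $N\cap L\neq 0$. The goal is then to upgrade ``$N\cap L\neq 0$'' to ``$N\cap L$ is second,'' which would make every pair adjacent and finish the proof.

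The hard part will be exactly this last upgrade, and I expect it to be the genuine obstacle rather than a routine check. Membership of $K$ only certifies $N\cap L\neq 0$; it does not make $N\cap L$ second, since being second requires $r(N\cap L)\in\{0,\,N\cap L\}$ for every $r\in R$, and this can fail for a submodule sitting strictly between $K$ and $M$. I would therefore test the claim against long uniserial examples, where a ``middle'' submodule is non-zero yet non-second, to decide whether a unique minimal submodule alone suffices or whether the extra hypothesis of part~(a)---that every non-zero proper non-second submodule be maximal---is secretly needed to forbid such a middle submodule from occurring as one of $N,L$. My plan is to resolve (b) at this point: either by extracting further structure forced by the uniqueness of $K$, or by recognizing that the statement must in fact carry the maximality hypothesis of~(a).
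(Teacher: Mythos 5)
Your argument for part (a) is correct and complete. It is essentially the paper's argument, except that the paper gets uniqueness of the minimal submodule by invoking Theorem \ref{2.2} (completeness gives a universal vertex, and alternative (b) of that theorem is ruled out because its two minimal submodules would have to be adjacent), whereas you rule out a second minimal submodule directly from $M_1\cap M_2=0$ not being second; the maximality half is word-for-word the same idea. Your version is more self-contained.

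Your refusal to complete part (b) is the right call: the statement of (b) is false, and the obstacle you isolated is exactly where it fails. Take $M=\mathbb{Z}_{p^4}$ as a $\mathbb{Z}$-module, a uniserial module of length $4$. Its unique minimal submodule is $p^3M$, and every non-zero submodule contains it, so the hypothesis of (b) holds. But for the vertices $pM$ and $p^2M$ we have $pM\cap p^2M=p^2M$, and $p^2M$ is not second, since $p\cdot p^2M=p^3M$ is neither $0$ nor $p^2M$; hence $pM$ and $p^2M$ are not adjacent and $SSI(M)$ is not complete. The paper's own proof of (b) commits precisely the fallacy you declined to commit: from ``$K\subseteq N\cap L$ and $K$ is second'' it concludes that $N\cap L$ is second, but adjacency requires the intersection itself to be a second submodule, not merely to contain one. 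Your alternative guess --- that (b) secretly needs the maximality condition appearing in (a) --- is also correct and yields the true statement: if $M$ has exactly one minimal submodule \emph{and} every non-zero non-second submodule is maximal, then $SSI(M)$ is complete. Indeed, for distinct vertices $N,L$, if $N\cap L$ were not second, it would be non-zero (it contains $K$) and non-second, hence maximal by hypothesis; but a maximal submodule contained in the proper submodules $N$ and $L$ forces $N=N\cap L=L$, contradicting $N\neq L$. So with that added hypothesis the ``upgrade'' you were looking for does go through, and (a) together with this corrected (b) gives the intended characterization of completeness.
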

\begin{proof}
(a) Let $SSI(M)$ be a complete graph. Then by Theorem \ref{2.2}, $SSI(M)$ has a universal vertex and
hence either one of the two conditions holds. Since two distinct minimal
submodules cannot be adjacent, part (a) of
Theorem \ref{2.2} holds. Let $N$ be a non-zero proper submodule of $M$ which is not second. If
possible, there exists a submodule $T$ of $M$ such that $N \subset T \subset M$, then $N \cap T = N$. Since
$N$ is not a second submodule, $N$ is not adjacent to $T$, a contradiction to the completeness of $SSI(M)$. Thus
every non-zero non-second submodule is a maximal submodule.

(b) Conversely, let $M$ has exactly one minimal submodule, say $K$. Since by assumption, every submodule of $M$ contains a minimal submodule, we have $K$ is contained in every non-zero submodule of $M$. Now, as every minimal submodule of $M$ is a second submodule of $M$, we have $SSI(M)$ is a complete graph.
\end{proof}

An $R$-module $M$ is said to be a \emph{comultiplication module} if for every submodule $N$ of $M$ there exists an ideal $I$ of $R$ such that $N=(0:_MI)$ \cite{MR3934877}.
\begin{prop}\label{2.7}
Let $M$ be a comultiplication $R$-module. Then two distinct non-trivial submodules $N$ and $K$ of $M$ with $Ann_R(N\cap K)=Ann_R(N)
+Ann_R(K)$ are adjacent in $SSI(M)$ if and only if $Ann_R(N)$ and $Ann_R(K)$ are adjacent in $PIS(R)$.
\end{prop}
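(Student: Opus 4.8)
The plan is to reduce the whole statement to a single correspondence valid in any comultiplication module, namely that for a submodule $S$ of $M$ the submodule $S$ is second if and only if $Ann_R(S)$ is a prime ideal of $R$. Granting this, the hypothesis $Ann_R(N\cap K)=Ann_R(N)+Ann_R(K)$ transports the condition ``$N\cap K$ is second'' into the condition ``$Ann_R(N)+Ann_R(K)$ is prime,'' and the definitions of the two graphs then finish the argument immediately.

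First I would record the structural facts about comultiplication modules that I intend to use. Since $M$ is comultiplication, every submodule $L$ satisfies $L=(0:_M Ann_R(L))$: writing $L=(0:_M I)$ gives $I\subseteq Ann_R(L)$, hence $(0:_M Ann_R(L))\subseteq(0:_M I)=L$, while the reverse inclusion is automatic. From this I extract two consequences: the map $L\mapsto Ann_R(L)$ is injective on submodules, and a proper nonzero submodule has a proper nonzero annihilator (if $Ann_R(L)=0$ then $L=M$, and if $Ann_R(L)=R$ then $L=0$). In particular $Ann_R(N)$ and $Ann_R(K)$ are genuine vertices of $PIS(R)$, and they are distinct because $N\neq K$.

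Next I would prove the correspondence. For the easy direction, if $S$ is second then $S\neq 0$ forces $Ann_R(S)\neq R$; and if $ab\in Ann_R(S)$ with $b\notin Ann_R(S)$, then secondness gives $bS=S$, so $aS=abS=0$, i.e. $a\in Ann_R(S)$, proving $Ann_R(S)$ prime (this direction uses no hypothesis on $M$). For the converse, suppose $\mathfrak{p}:=Ann_R(S)$ is prime and take $r$ with $rS\neq 0$; then $r\notin\mathfrak{p}$, and from $Ann_R(rS)=\{x:xr\in\mathfrak{p}\}=(\mathfrak{p}:_R r)=\mathfrak{p}$ (using $r\notin\mathfrak{p}$ and primeness) the comultiplication identity yields $rS=(0:_M Ann_R(rS))=(0:_M\mathfrak{p})=S$, so $S$ is second. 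This converse, where comultiplication is genuinely needed, is the step I expect to be the main (if modest) obstacle: the delicate points are the colon-ideal computation and aligning it with the identity $rS=(0:_M Ann_R(rS))$.

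Finally I would assemble the chain of equivalences. Applying the correspondence to $S=N\cap K$, the vertices $N,K$ are adjacent in $SSI(M)$ exactly when $Ann_R(N\cap K)$ is prime; by the hypothesis this holds exactly when $Ann_R(N)+Ann_R(K)$ is prime; and since $Ann_R(N),Ann_R(K)$ are distinct nonzero proper ideals, this is precisely the condition for their adjacency in $PIS(R)$. I note that primeness already forces properness, so no separate treatment of $N\cap K=0$ is required: if $Ann_R(N)+Ann_R(K)$ is prime it is proper, forcing $N\cap K=(0:_M Ann_R(N\cap K))\neq 0$. This completes the proof.
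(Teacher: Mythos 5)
Your proposal is correct and takes essentially the same approach as the paper: both reduce the statement to the single key fact that in a comultiplication module a submodule $S$ is second if and only if $Ann_R(S)$ is a prime ideal of $R$. The only difference is that the paper cites this fact (Theorem 196(a) of the comultiplication-modules survey) while you reprove it from the identity $L=(0:_M Ann_R(L))$, additionally checking the vertex conditions (distinctness and non-triviality of $Ann_R(N)$, $Ann_R(K)$) that the paper leaves implicit.
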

\begin{proof}
This follows from the fact that in the comultiplication $R$-module $M$, we have $S$ is a second submodule of $M$ if and only if
$Ann_R(S)$ be a prime ideal of $R$ \cite[Theorem 196 (a)]{MR3934877}.
\end{proof}

\begin{ex}\label{2.8}
For each prime number $p$ the $\Bbb Z$-module $\Bbb Z_{p^\infty}$ is a star graph with center vertex $(1/p+\Bbb Z)\Bbb Z_{p^\infty}$.
\end{ex}

\begin{thm}\label{2.7}
Let $M$ be an $R$-module such that every submodule of $M$ contains a minimal submodule. Then $SSI(M)$ is connected if and only if $M$ is not a
direct sum of two of its minimal submodules. If $SSI(M)$ is connected, then diam$(SSI(M)) \leq 2$.
\end{thm}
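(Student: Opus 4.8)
The plan is to prove the two implications of the biconditional separately and then extract the diameter bound from the harder direction. Throughout I will use the fact (already exploited in the proof of Theorem \ref{2.5}) that every minimal submodule is second, and hence that a minimal submodule $S$ is adjacent in $SSI(M)$ exactly to the vertices that contain it.

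First I would dispose of the easy direction by contraposition: suppose $M = M_1 \oplus M_2$ is a direct sum of two (necessarily distinct) minimal submodules. Then each $M_i$ is minimal, and since $M/M_i \cong M_{3-i}$ is simple, each $M_i$ is also maximal. By Theorem \ref{2.5} both $M_1$ and $M_2$ are isolated vertices of $SSI(M)$, so the graph has at least two one-point components and is therefore disconnected. This shows that connectedness forces $M$ not to be a direct sum of two of its minimal submodules.

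For the converse, assume $M$ is not a direct sum of two minimal submodules and take two distinct vertices $N, K$; I would bound $d(N,K)$ by $2$ by producing either an edge or a common neighbour, splitting on whether $N \cap K = 0$. If $N \cap K \ne 0$, then by hypothesis $N \cap K$ contains a minimal submodule $S$, which is automatically second. Since $S \subseteq N$ and $S \subseteq K$, either $S$ coincides with $N$ or with $K$ — in which case $N \cap K = S$ is second and $N - K$ is itself an edge — or $S$ is a genuine third vertex adjacent to both, giving the path $N - S - K$.

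The substantive case is $N \cap K = 0$, which is exactly where the structural hypothesis enters. Choose minimal submodules $S_N \subseteq N$ and $S_K \subseteq K$; they are distinct with $S_N \cap S_K \subseteq N \cap K = 0$, so $W := S_N + S_K = S_N \oplus S_K$ is again a direct sum of two minimal submodules. The crux is that precisely because $M$ is \emph{not} such a direct sum, $W$ is a proper non-zero submodule, hence a vertex, and one checks $W \ne N$ and $W \ne K$ (e.g. $W = N$ would force $S_K \subseteq N \cap K = 0$). Using the modular law together with $N \cap K = 0$ gives $W \cap N = S_N + (N \cap S_K) = S_N$ and symmetrically $W \cap K = S_K$, both second; thus $W$ is a common neighbour and $d(N,K) \le 2$. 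The main obstacle is this last case: one must certify $W$ as a legitimate vertex (the only place the hypothesis is used) and carry out the modular-law computation of the two intersections, the rest being routine. Combining the cases yields both connectedness and $\mathrm{diam}(SSI(M)) \le 2$.
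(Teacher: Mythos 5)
Your proof is correct and follows essentially the same route as the paper's: the reverse direction via Theorem \ref{2.5} (a minimal direct summand is also maximal, hence isolated), and the forward direction by exhibiting a common neighbour that is either a minimal submodule of $N\cap K$ or the sum $S_N+S_K$ of minimal submodules from each, computed via the modular law. If anything, your version is slightly more careful than the paper's, since you explicitly verify that $W=S_N\oplus S_K$ is a vertex distinct from $N$ and $K$ and handle the degenerate case where the common minimal submodule equals one of the endpoints.
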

\begin{proof}
Let $N_1$, $N_2$ be two non-trivial submodules of $M$. If $N_1\cap N_2$ is a second submodule, then $N_1- N_2$
in $SSI(M)$. Otherwise, assume that $N_1\cap N_2$ is not a second submodule. By assumption, both submodules $N_1$ and $N_2$ are contain a minimal submodules of $M$. If they are contain the same minimal submodule, say $K$, then we have $N_1 - K - N_2$, as minimal submodules
are second and hence $d(N_1, N_2) = 2$. Thus, we assume that they are not contain the same minimal submodules, $K_1\subset N_1$, $K_2\subset N_2$ and $K_1 \not= K_2$ are two minimal submodules of $M$.
First assume that $K_1+K_2 \not=M$. By the minimality of $K_2$, we have $K_2 \cap N_1=0$.
Then $N_1 \cap (K_1+K_2)=N_1\cap K_1+N_1\cap K_2=K_1$ and so $N_1 -  K_1 + K_2$. Similarly,
 $K_1 + K_2-N_2$. Thus we have a path of length 2 given
by $N_1 -  K_1 + K_2 - N_2$ and so  $d(N_1, N_2) \leq 2$. Hence,  diam$(SSI(M)) \leq 2$. If $K_1+ K_2 =M$, then since $K_1$ and $K_2$ are minimal submodules, the sum is directed.

Conversely, if $M$ is direct sum of two minimal submodules $K_1$ and $K_2$, then
$K_1\cong M/K_2$ and $K_2\cong M/K_1$. Thus $K_1$ and $K_2$ are also maximal submodules of $M$. This in turn implies that
$SSI(M)$ have two isolated vertices $K_1$ and $K_2$. Thus $SSI(M)$ is not connected.
\end{proof}

\begin{cor}\label{2.8}
Let $M$ be a comultiplication $R$-module such that $M$ is not a
direct sum of two of its minimal submodules. Then diam$(SSI(M)) \leq 2$.
\end{cor}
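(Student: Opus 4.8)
The plan is to deduce this corollary directly from Theorem~\ref{2.7}, whose only standing hypothesis is that every submodule of $M$ contains a minimal submodule. Thus the entire task reduces to verifying that a comultiplication module automatically enjoys this property; once this is in hand, the conclusion is immediate.

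To verify the property, I would start from an arbitrary non-zero submodule $N$ and an element $0\neq x\in N$. Using that $M$ is a comultiplication module, I would write $Rx=(0:_M Ann_R(x))$, choose a maximal ideal $\mathfrak{m}$ of $R$ with $Ann_R(x)\subseteq \mathfrak{m}$, and observe that $(0:_M \mathfrak{m})\subseteq (0:_M Ann_R(x))=Rx\subseteq N$ because the operator $(0:_M -)$ reverses inclusions. Since $(0:_M \mathfrak{m})$ is annihilated by the maximal ideal $\mathfrak{m}$, it is a module over the field $R/\mathfrak{m}$, so any non-zero cyclic submodule of it is simple, i.e.\ a minimal submodule of $M$ sitting inside $N$. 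This produces the desired minimal submodule provided $(0:_M \mathfrak{m})\neq 0$.

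The hard part will therefore be the non-vanishing $(0:_M \mathfrak{m})\neq 0$: one must choose the maximal ideal $\mathfrak{m}$ over $Ann_R(x)$ inside the cosupport of $M$, equivalently one must know that a comultiplication module has essential socle. This is exactly where the comultiplication hypothesis is indispensable, since for a non-comultiplication module such as $R=k[[t]]$ regarded over itself the naive choice $\mathfrak{m}=(t)$ gives $(0:_M \mathfrak{m})=0$ and no minimal submodule at all. To close this gap I would either invoke the structure theory of comultiplication modules from \cite{MR3934877}, or run a short direct argument showing that the sum of all minimal submodules of $M$ meets every non-zero submodule non-trivially.

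Finally, with the hypothesis of Theorem~\ref{2.7} confirmed, I would apply that theorem: since $M$ is not a direct sum of two of its minimal submodules, $SSI(M)$ is connected, and the last assertion of Theorem~\ref{2.7} then yields $\mathrm{diam}(SSI(M))\leq 2$, completing the proof.
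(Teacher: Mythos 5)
Your proposal follows exactly the paper's route: the paper's proof is a one-line reduction to Theorem~\ref{2.7}, citing \cite[Theorem 7]{MR3934877} for the fact that every submodule of a comultiplication module contains a minimal submodule. Your direct argument for that fact has the gap you yourself identify (the non-vanishing of $(0:_M \mathfrak{m})$), but your stated fallback---invoking the theory of comultiplication modules from \cite{MR3934877}---is precisely what the paper does, so the proposal is correct and essentially identical to the paper's proof.
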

\begin{proof}
Since every submodule of $M$ contains a minimal submodule by \cite[Theorem 7]{MR3934877}, the result follows from Theorem \ref{2.7}.
\end{proof}

\begin{cor}\label{2.998}
Let $M$ be an $R$-module such that every submodule of $M$ contains a minimal submodule. If $SSI(M)$ is a connected graph, then the intersection of any two maximal submodules of $M$ is non-zero.
\end{cor}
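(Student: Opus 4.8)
The plan is to prove the contrapositive through Theorem~\ref{2.7}: I will show that if two distinct maximal submodules of $M$ meet in $0$, then $M$ is a direct sum of two of its minimal submodules, which by Theorem~\ref{2.7} forces $SSI(M)$ to be disconnected. So suppose, for contradiction, that $M_1$ and $M_2$ are distinct maximal submodules of $M$ with $M_1\cap M_2=0$.

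First I would produce a direct sum decomposition. Since $M_1$ and $M_2$ are distinct and maximal, neither contains the other, so $M_1\subsetneq M_1+M_2$; maximality of $M_1$ then forces $M_1+M_2=M$. Combined with $M_1\cap M_2=0$ this yields $M=M_1\oplus M_2$. The decisive step is then to promote ``maximal'' to ``minimal'': from this decomposition we obtain the isomorphisms $M_2\cong M/M_1$ and $M_1\cong M/M_2$, and since $M_1$ (resp.\ $M_2$) is maximal, the quotient $M/M_1$ (resp.\ $M/M_2$) is simple. Hence $M_2$ and $M_1$ are simple, i.e.\ both are minimal submodules of $M$, and $M=M_1\oplus M_2$ is a direct sum of two minimal submodules.

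Finally, applying Theorem~\ref{2.7} to this decomposition shows that $SSI(M)$ is not connected, contradicting the hypothesis; therefore any two distinct maximal submodules of $M$ must have non-zero intersection. The only step carrying real content is the passage from maximality to minimality via $M_2\cong M/M_1$ together with the simplicity of the quotient; the remainder is routine module bookkeeping with the direct sum and a direct appeal to Theorem~\ref{2.7}, so I do not anticipate a genuine obstacle. One edge case worth a remark is when $M$ is simple, where the zero submodule is the unique maximal submodule, so the phrase ``any two maximal submodules'' is understood to mean two distinct ones, of which there are then none and the statement holds vacuously.
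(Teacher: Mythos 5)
Your proof is correct, but it follows a genuinely different route from the paper's. The paper argues directly with the modular law: given maximal submodules $K_1,K_2$ with $K_1\cap K_2=0$, it uses the hypothesis to pick a minimal submodule $S_1\subseteq K_1$, notes that $S_1+K_2=M$ by maximality of $K_2$, and computes
$K_1=K_1\cap(S_1+K_2)=S_1+(K_1\cap K_2)=S_1$,
so that $K_1$ is simultaneously minimal and maximal; Theorem~\ref{2.5} then makes $K_1$ an isolated vertex, contradicting connectedness. You instead form the decomposition $M=M_1\oplus M_2$ and promote maximality to minimality via the isomorphisms $M_2\cong M/M_1$ and $M_1\cong M/M_2$, and then quote the ``only if'' direction of Theorem~\ref{2.7} as a black box. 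Both arguments hinge on the same underlying phenomenon---a maximal submodule forced to be minimal, hence isolated---but you reach it by the isomorphism theorems rather than the modular law; indeed, the disconnectedness direction of Theorem~\ref{2.7} is proved in the paper by exactly your promotion argument run in reverse (minimal to maximal), so your proof is in effect a dualization of that step plus a citation. What each buys: your route never uses the minimal-submodule hypothesis directly (it enters only through the citation of Theorem~\ref{2.7}, and in fact your argument yields the stronger conclusion that both maximal submodules are minimal), while the paper's route is shorter, resting only on Theorem~\ref{2.5}. Your closing remark on the simple-module edge case is a reasonable reading; the paper glosses over it, as both proofs implicitly take the two maximal submodules to be distinct non-zero vertices of the graph.
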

\begin{proof}
Let $SSI(M)$ be a connected graph.
Assume contrary that $K_1$ and $K_2$ are two maximal submodules of $M$ such that $K_1 \cap K_2=0$.
Since every submodule of $M$ contains a minimal submodule, there is a minimal submodule $S_1$ of $M$ such that $S_1 \subseteq K_1$.
Since  $K_1 \cap K_2=0$, $S_1 \not \subseteq K_2$. Thus as $K_2$ is maximal, $K_2+S_1=M$. Hence
$$
K_1=K_1\cap M=K_1 \cap (S_1+K_2)=K_1 \cap S_1+K_1\cap K_2=S_1.
$$
Thus $K_1$ is a maximal and minimal submodule of $M$. Hence by Theorem \ref{2.5}, $K_1$ is an isolated vertex, which is a contradiction.
\end{proof}

\begin{thm}\label{2.9}
Let $M$ be an $R$-module such that $M$ have two non-comparable submodules which are adjacent in $SSI(M)$. Then
girth$(SSI(M)) = 3$.
\end{thm}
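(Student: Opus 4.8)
The plan is to exhibit an explicit triangle in $SSI(M)$, which forces the girth to equal $3$, since no simple graph can contain a cycle shorter than length $3$. Let $N$ and $K$ be the two non-comparable submodules that are adjacent in $SSI(M)$, and set $S := N \cap K$. By the definition of adjacency in $SSI(M)$, the submodule $S = N \cap K$ is a second submodule of $M$; in particular $S \neq 0$.

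First I would check that $S$ is a genuine vertex of $SSI(M)$ that is distinct from both $N$ and $K$. Being second, $S$ is non-zero, and since $S \subseteq N$ with $N$ a proper submodule of $M$, we see that $S$ is a non-zero proper submodule and hence a vertex. Because $N$ and $K$ are non-comparable, we have $N \not\subseteq K$ and $K \not\subseteq N$, so the inclusions $S = N \cap K \subseteq N$ and $S = N \cap K \subseteq K$ are both \emph{strict}. Therefore $S$ differs from $N$ and from $K$, and $N$, $K$, $S$ are three pairwise distinct vertices.

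Next I would verify the three required adjacencies. The edge $N - K$ holds by hypothesis. For the remaining two, observe that $N \cap S = N \cap (N \cap K) = N \cap K = S$ and, symmetrically, $K \cap S = S$. Since $S$ is a second submodule, both $N \cap S$ and $K \cap S$ are second submodules, so $N - S$ and $K - S$ in $SSI(M)$. Hence $N - K - S - N$ is a cycle of length $3$, which yields girth$(SSI(M)) = 3$.

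The conclusion is essentially forced once one recognizes $S = N \cap K$ as a third vertex, so there is no serious obstacle here; the only point requiring care is confirming that non-comparability makes both inclusions $S \subsetneq N$ and $S \subsetneq K$ strict. This is exactly what guarantees that $S$ is genuinely different from $N$ and $K$, so that we obtain an honest triangle rather than a degenerate two-vertex configuration.
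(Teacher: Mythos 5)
Your proof is correct and follows exactly the same route as the paper: take the intersection $S = N \cap K$, which is second by adjacency, observe that non-comparability makes $S$ a vertex distinct from both $N$ and $K$, and note that $N\cap S = S$ and $K\cap S = S$ are second, giving the triangle $N - K - S - N$. The paper states this tersely ("$N_1$, $N_2$, and $N_1\cap N_2$ forms a triangle"), and your write-up simply supplies the details that the paper leaves implicit.
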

\begin{proof}
Let $N_1$ and $N_2$ be two non-comparable submodules which are adjacent in $SSI(M)$.
Then $N_1\cap N_2$ is a second submodule of $M$. Since $N_1$ and $N_2$ are non-comparable, then $N_1$, $N_2$, and $N_1\cap N_2$ forms a triangle in $SSI(M)$, i.e. girth($SSI(M)$) = 3.
\end{proof}

\begin{cor}\label{2.10}
If $SSI(M)$ is acyclic or girth$(SSI(M)) > 3$, then no two non-comparable
submodules of $M$ are adjacent in $SSI(M)$ and adjacency occurs only in case
of comparable submodules, i.e. for any edge in $SSI(M)$, one of the terminal vertices is a
second submodule of $M$.
\end{cor}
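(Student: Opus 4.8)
The plan is to read this off as essentially the contrapositive of Theorem \ref{2.9}, together with one short direct observation handling the ``comparable'' refinement. First I would note that both hypotheses reduce to a single numerical condition: if $SSI(M)$ is acyclic then, by the convention fixed in the introduction, $g(SSI(M)) = \infty$, and of course $g(SSI(M)) > 3$ also excludes the value $3$. So in either case $g(SSI(M)) \neq 3$.

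Next I would invoke Theorem \ref{2.9} contrapositively. That theorem says that if $M$ possesses two non-comparable submodules that are adjacent in $SSI(M)$, then $g(SSI(M)) = 3$. Hence, whenever $g(SSI(M)) \neq 3$, no two non-comparable submodules of $M$ can be adjacent. This gives the first assertion directly, with no extra work.

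For the second assertion I would take an arbitrary edge $N - K$ of $SSI(M)$. By the first part the vertices $N$ and $K$ must be comparable, so without loss of generality $N \subseteq K$. Then $N \cap K = N$, and since $N$ and $K$ are adjacent, $N \cap K$ is by Definition \ref{2.1} a second submodule of $M$. Therefore $N$, one of the two terminal vertices of the edge, is a second submodule of $M$, which is exactly the claimed conclusion.

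There is no serious obstacle here; the only point demanding care is the treatment of the acyclic case, and that is dispatched by the observation that an acyclic graph has girth $\infty$, so both hypotheses collapse to $g(SSI(M)) \neq 3$ and the contrapositive of Theorem \ref{2.9} applies uniformly.
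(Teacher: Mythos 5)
Your proposal is correct and matches the paper's intent exactly: the paper states this corollary with no written proof, treating it as an immediate consequence of Theorem \ref{2.9}, and your argument (the contrapositive of Theorem \ref{2.9}, using the convention that an acyclic graph has girth $\infty$, followed by the observation that for a comparable adjacent pair $N \subseteq K$ the intersection $N \cap K = N$ is second by Definition \ref{2.1}) is precisely the intended reasoning.
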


\begin{thm}\label{2.11}
Let $M$ be an $R$-module such that girth($SSI(M)) = n$. Then there exist at least
$\lfloor n/2 \rfloor$ distinct second submodules in $M$.
\end{thm}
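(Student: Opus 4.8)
The plan is to extract the required second submodules directly from a shortest cycle and to use Corollary \ref{2.10} to control the structure of its edges. Since girth$(SSI(M)) = n$ is finite, there is a shortest cycle
\[
V_1 - V_2 - \cdots - V_n - V_1
\]
of length $n$ in $SSI(M)$, in which the $V_i$ are pairwise distinct non-zero proper submodules and each consecutive intersection $V_i \cap V_{i+1}$ (indices read cyclically, $V_{n+1}:=V_1$) is a second submodule of $M$. My first move is to separate out the degenerate case $n = 3$: here $\lfloor n/2 \rfloor = 1$, and already the single second submodule $V_1 \cap V_2$ furnished by one edge suffices, so nothing further is required.

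For $n \geq 4$ I would invoke Corollary \ref{2.10}: since girth$(SSI(M)) = n > 3$, every edge of $SSI(M)$ joins two comparable submodules, one of which is a second submodule of $M$. In particular each edge $V_iV_{i+1}$ of the cycle has at least one endpoint that is a second submodule. Writing $S$ for the set of those $V_i$ that are second submodules, this says precisely that $S$ is a vertex cover of the cycle on $V_1,\dots,V_n$.

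The combinatorial heart is then to bound $|S|$ from below, which I would do via the complement. If two cycle vertices were both non-second and adjacent along the cycle, then the edge joining them would have neither endpoint a second submodule, contradicting the previous paragraph; hence the non-second cycle vertices form an independent set in a cycle on $n$ vertices. As such a cycle has independence number $\lfloor n/2 \rfloor$, at most $\lfloor n/2 \rfloor$ of the $V_i$ fail to be second, so at least $n - \lfloor n/2 \rfloor = \lceil n/2 \rceil \geq \lfloor n/2 \rfloor$ of them are second submodules. Because the $V_i$ are distinct vertices of a simple cycle, these are distinct second submodules of $M$, giving the claim (in fact with the slightly sharper bound $\lceil n/2 \rceil$).

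The step I expect to require the most care is the boundary between the two regimes: Corollary \ref{2.10} is available only when girth$(SSI(M)) > 3$, so the case $n = 3$ genuinely has to be handled on its own, where the cycle vertices may be pairwise non-comparable (as in Theorem \ref{2.9}) and one cannot expect more than the single second submodule that $\lfloor 3/2\rfloor$ demands. The remaining points are routine: confirming that distinct cycle vertices yield distinct submodules, and applying the independence-number bound for a cycle correctly.
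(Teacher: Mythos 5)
Your proof is correct, and it shares the paper's skeleton (split off $n=3$, then invoke Corollary \ref{2.10} for $n>3$ to see that every edge of a shortest cycle has a second endpoint), but the counting step is genuinely different and, frankly, tighter. The paper proceeds locally: it normalizes one edge so that $N_2\subseteq N_1,N_3$ with $N_2$ second, then runs a two-case analysis on consecutive quadruples ($N_4\subseteq N_3$ versus $N_3\subseteq N_4$) and finishes with an informal ``continuing in this manner till $N_n$,'' which is really a sketched induction along the cycle. You instead make one global observation: the second vertices of the cycle form a vertex cover of $C_n$, equivalently the non-second vertices form an independent set, so at most $\lfloor n/2\rfloor$ of the $n$ distinct cycle vertices can fail to be second. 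This replaces the paper's case analysis and hand-waved iteration with a standard, fully rigorous combinatorial fact, and it even yields the slightly sharper bound $\lceil n/2\rceil$ when $n>3$ (strictly better than $\lfloor n/2\rfloor$ for odd $n$). Your handling of $n=3$ is also sound: there one cannot appeal to Corollary \ref{2.10}, but a single edge already produces the one required second submodule $V_1\cap V_2$. The only point worth stating explicitly, which you do note, is that the second submodules you exhibit are cycle \emph{vertices}, hence pairwise distinct because the cycle is a cycle in a simple graph; the paper's extracted submodules are likewise vertices, so both arguments deliver distinctness the same way.
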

\begin{proof}
By Theorem \ref{2.9}, if two non-comparable submodules are adjacent in $SSI(M)$,
then girth($SSI(M) = 3$ and the intersection of those two non-comparable submodules forms a
second submodule, and hence $M$ contains at least
$\lfloor 3/2 \rfloor=1$ second submodule. Thus, we assume
that girth$(SSI(M)) > 3$, i.e. by Corollary \ref{2.10}, adjacency occurs only in case of
comparable submodules. Let $N_1 - N_2 -N_3- \cdots - N_n - N_1$ be a cycle of length $n$. First, we
observe that neither $N_1 \subseteq N_2 \subseteq  N_3\subseteq \cdots \subseteq N_n \subseteq N_1$ nor $N_1 \supseteq N_2 \supseteq N_3\supseteq \cdots\supseteq N_n \supseteq N_1$ can hold, as in both the cases all the submodules will be equal. Thus, without loss of
generality, we have $N_2 \subseteq  N_1, N_3$ and $N_2$ is a second submodule of $M$. Hence, we have the following
two cases:

\textbf{Case I.} Let $N_2 \subseteq  N_1, N_3$ and $N_4 \subseteq N_3$. Then, we have $N_2$, $N_4$ to be second submodules.

\textbf{Case II.} Let $N_2 \subseteq  N_1, N_3$ and $N_3 \subseteq N_4$. Then, we have $N_2$, $N_3$ to be second submodules.
In any case, we get at least 2 submodules to be second in $M$ among $N_1$, $N_2$, $N_3$ and $N_4$.
Continuing in this manner till $N_n$, we get at least
$\lfloor n/2 \rfloor$ submodules which are second in $M$.
\end{proof}

\begin{cor}\label{2.12}
If an $R$-module $M$ has $k$ second submodules, then $SSI(M)$ is either acyclic or
girth$(SSI(M)) \leq 2k$.
\end{cor}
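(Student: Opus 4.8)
The plan is to prove the contrapositive of Theorem \ref{2.11} and then translate the resulting constraint on the girth into the stated numerical bound. First I would split into the two mutually exclusive cases offered by the conclusion: either $SSI(M)$ is acyclic, in which case the first alternative holds and nothing further is needed, or $SSI(M)$ contains a cycle, so that $\text{girth}(SSI(M))$ equals some finite integer $n$. Since by Definition \ref{2.1} the graph $SSI(M)$ is simple, any such cycle has length $n \ge 3$.

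In the cyclic case I would invoke Theorem \ref{2.11} with this value of $n$. That theorem produces at least $\lfloor n/2 \rfloor$ pairwise distinct second submodules of $M$. Since $M$ is assumed to possess exactly $k$ second submodules, every family of distinct second submodules has cardinality at most $k$, and therefore $\lfloor n/2 \rfloor \le k$. This is the heart of the argument: scarcity of second submodules caps the length of the shortest cycle, and the whole statement is essentially Theorem \ref{2.11} read backwards.

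The remaining step is the arithmetic passage from $\lfloor n/2 \rfloor \le k$ to $n \le 2k$, and this is where I expect the genuine difficulty to lie. When $n$ is even, say $n = 2m$, one gets $m \le k$ and hence $n \le 2k$ at once, matching the stated bound exactly. When $n$ is odd, however, writing $n = 2m+1$ yields only $m \le k$, i.e. $n \le 2k+1$, which is one unit weaker than asserted; indeed a triangle ($n=3$) whose three pairwise intersections all coincide in a single second submodule shows that $k=1$ is compatible with girth $3$, so the gap is real and not merely an artifact of the estimate. Closing this parity gap is the crux. To recover the sharp constant I would return to the shortest cycle $N_1 - N_2 - \cdots - N_n - N_1$ realizing an odd girth and re-run the counting inside the proof of Theorem \ref{2.11}, trying to show that an odd-length shortest cycle forces an additional second submodule among its vertices; alternatively I would attempt to show that a shortest cycle of odd length $2k+1$ assembled from only $k$ second submodules must admit a chord, producing a strictly shorter cycle and contradicting minimality of the girth. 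If neither refinement goes through, the honest conclusion extractable from Theorem \ref{2.11} is $\text{girth}(SSI(M)) \le 2k+1$, with the even-girth cases delivering the cleaner $2k$.
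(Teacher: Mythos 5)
Your reading of the intended argument is exactly right: the paper offers no proof of Corollary \ref{2.12} beyond its juxtaposition with Theorem \ref{2.11}, and the intended deduction is precisely the one you give, namely that girth $=n$ forces $\lfloor n/2\rfloor \le k$. Your complaint about the final arithmetic step is also justified, and your suspicion that the parity gap is genuine is correct: the corollary as stated fails at $k=1$. Concretely, let $F$ be any field, $R=F[x,y]/(x^2,y^2)$, and $M=R$ as a module over itself. Every nonzero ideal of $R$ contains the socle $(xy)$ (multiply a nonzero element by $x$, by $y$, or by a unit), and any proper ideal $I$ properly containing $(xy)$ contains some $bx+cy+dxy$ with $(b,c)\neq(0,0)$, whence $0\neq xI\subseteq (xy)\subsetneq I$ or $0\neq yI\subseteq (xy)\subsetneq I$, so $I$ is not second; since $(xy)$ itself is second (units fix it, the maximal ideal kills it) and $M=R$ is not second, $M$ has exactly $k=1$ second submodule. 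Yet the three ideals $(x)$, $(y)$, $(x+y)$ are pairwise non-comparable with all pairwise intersections equal to $(xy)$, so they form a triangle in $SSI(M)$: the girth is $3>2=2k$. Hence no refinement can rescue the stated bound, and the honest conclusion you extracted is the best available in general.

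Your proposed chord argument does, however, settle everything except the triangle case, and is worth carrying out: in a shortest cycle $N_1-N_2-\cdots-N_n-N_1$ with $n\ge 4$, every edge joins comparable vertices by Corollary \ref{2.10}, and two consecutive inclusions in the same direction, say $N_{i-1}\subseteq N_i\subseteq N_{i+1}$, would make $N_{i-1}=N_{i-1}\cap N_i$ a second submodule, hence $N_{i-1}$ adjacent to $N_{i+1}$, producing a triangle and contradicting $n\ge 4$. Thus the inclusion directions must alternate around the cycle, which forces $n$ to be even, and then $\lfloor n/2\rfloor\le k$ gives $n\le 2k$ with no parity loss. The correct form of the corollary is therefore: $SSI(M)$ is acyclic, or girth$(SSI(M))=3$, or girth$(SSI(M))$ is even and at most $2k$; equivalently, girth$(SSI(M))\le \max(3,2k)$. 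In particular the paper's bound is valid whenever $k\ge 2$, and the unique failure is exactly the configuration you isolated.
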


A submodule $N$ of an $R$-module $M$ is called \textit{large submodule} in $M$ if for every non-zero submodule $K$ of $M$ we have $N \cap K \not=0$ \cite{AF74}. An $R$-module $M$ is called \textit{uniform module} if every non-zero submodule of $M$ is large submodule in $M$ \cite{Goo76}.
\begin{prop}\label{2.8}
Let $M$ be a uniform $R$-module such that every non-zero submodule of $M$ is second. Then $SSI(M)$ is a complete garph.
\end{prop}
\begin{proof}
This is straightforward
\end{proof}

\begin{defn}
We defined the subgraph $\widetilde{PSS(M)}$ of the graph $PIS(R)$ whose vertices are $(N:_RM)$ for all submodules $N$ of $M$ such that $(N:_RM)$ is a non-trivial ideal of $R$ and
 two distinct vertices $(N:_RM)$ and $(K:_RM)$ are adjacent if and only if $(N:_RM)+(K:_RM)$ is a prime ideal of $R$.
 \end{defn}

An $R$-module $M$ satisfies the \emph{double annihilator
conditions} (DAC for short)  if for each ideal $I$ of $R$
we have $I=Ann_R((0:_MI))$.
An $R$-module $M$ is said to be a \emph{strong comultiplication module} if $M$ is
a comultiplication $R$-module and satisfies the DAC conditions \cite{MR3934877}.

\begin{thm}\label{21.9}
Let $M$ be an $R$-module. If $M$ is a strong comultiplication $R$-module such that $SSI(M)$ is a complete garph, then $\widetilde{PSS(M)}$ is a complete graph.
\end{thm}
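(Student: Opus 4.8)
The plan is to exploit the order-reversing duality between submodules of $M$ and ideals of $R$ carried by a strong comultiplication module, turning the completeness of $SSI(M)$ into the completeness of $\widetilde{PSS(M)}$. I would lean on two facts available for a comultiplication module $M$: first, by \cite[Theorem 196 (a)]{MR3934877}, a submodule $S$ of $M$ is second if and only if $Ann_R(S)$ is a prime ideal of $R$; second, the DAC condition gives $Ann_R((0:_MI))=I$ for every ideal $I$, and, combined with comultiplication, $N=(0:_MAnn_R(N))$ for every submodule $N$.

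First I would record the key identity $Ann_R(N\cap K)=Ann_R(N)+Ann_R(K)$ for all submodules $N,K$ of $M$. This follows by writing $N=(0:_MAnn_R(N))$ and $K=(0:_MAnn_R(K))$, using $(0:_MI)\cap(0:_MJ)=(0:_M(I+J))$ to obtain $N\cap K=(0:_M(Ann_R(N)+Ann_R(K)))$, and then applying DAC to take annihilators of both sides. It is precisely here that the \emph{strong} (rather than plain) comultiplication hypothesis is essential.

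Next I would reinterpret the vertices of $\widetilde{PSS(M)}$. For a vertex $(N:_RM)$ set $L:=(0:_M(N:_RM))$; then DAC yields $Ann_R(L)=(N:_RM)$, so every vertex of $\widetilde{PSS(M)}$ is the annihilator of a submodule of $M$. Given two distinct vertices $(N:_RM)$ and $(K:_RM)$ with associated submodules $L_1$ and $L_2$, the identity above gives $(N:_RM)+(K:_RM)=Ann_R(L_1)+Ann_R(L_2)=Ann_R(L_1\cap L_2)$. Since $SSI(M)$ is complete, $L_1\cap L_2$ is a second submodule, hence $Ann_R(L_1\cap L_2)$ is prime; therefore $(N:_RM)+(K:_RM)$ is prime and the two vertices are adjacent. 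As the pair was arbitrary, $\widetilde{PSS(M)}$ is complete.

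The step I expect to be the real obstacle is verifying that $L_1$ and $L_2$ are genuine vertices of $SSI(M)$, i.e. distinct non-zero proper submodules, so that completeness of $SSI(M)$ may actually be invoked. Distinctness follows because the ideals $Ann_R(L_i)=(N_i:_RM)$ are distinct, and $L_i\neq 0$ because $(N_i:_RM)$ is a proper ideal. The delicate point is that $L_i=M$ can occur, and this happens exactly when $(N_i:_RM)=Ann_R(M)$, that is, in the non-faithful case. There $L_1\cap L_2$ collapses to the remaining submodule and one must instead show directly that $(K:_RM)=Ann_R(L_2)$ is prime, equivalently that $L_2$ is second; to close this sub-case I would invoke Theorem \ref{2.6}(a), which forces every non-zero non-second submodule to be maximal, and dispose of the maximal sub-case separately.
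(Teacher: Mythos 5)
Your proof is correct and takes essentially the same route as the paper: both arguments send the vertices $(N:_RM)$ and $(K:_RM)$ to the submodules $L_1=(0:_M(N:_RM))$ and $L_2=(0:_M(K:_RM))$, use completeness of $SSI(M)$ to conclude that $L_1\cap L_2$ is a second submodule, and then identify $Ann_R(L_1\cap L_2)$ with $(N:_RM)+(K:_RM)$ via DAC together with $(0:_MI)\cap(0:_MJ)=(0:_M(I+J))$, so that this sum is a prime ideal; your write-up merely makes explicit the vertex checks (distinctness, non-zero, proper) that the paper leaves tacit. The one point where you depart from the paper---the sub-case in which some $L_i=M$---is vacuous: applying DAC to the zero ideal gives $Ann_R(M)=Ann_R\bigl((0:_M(0))\bigr)=(0)$, so a strong comultiplication module is automatically faithful, and then a non-zero ideal $(N_i:_RM)$ cannot satisfy $(0:_M(N_i:_RM))=M$; hence your planned appeal to Theorem~\ref{2.6}(a) is never needed.
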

\begin{proof}
Let $M$ be a strong comultiplication $R$-module and $SSI(M)$ be a complete garph. Assume that $N$ and $K$ are two submodules of $M$ such that $(N:_RM)$ and $(K:_RM)$ are two non-trivial ideals of $R$. Then $(0:_M(N:_RM))$ and $(0:_M(K:_RM))$ are two non-trivial submodules of $M$. So $(0:_M(N:_RM))\cap (0:_M(K:_RM))$ is a second submodule of $M$.
Hence $Ann_R((0:_M(N:_RM))\cap (0:_M(K:_RM)))= (N:_RM)+(K:_RM)$ is a prime ideal of $R$.
\end{proof}

Let $G$ be a graph. A non-empty subset $D$ of the
vertex set $V(G)$ is called a \textit{dominating set} if every vertex $V (G\setminus D)$ is adjacent to at least
one vertex of $D$. The \textit{domination number} $\gamma(G)$ is the minimum cardinality among the dominating sets of $G$.
\begin{thm}\label{2.13}
Let $M$ be an $R$-module such that every submodule of $M$ contains a minimal submodule and let $\mathcal{M}$ be the set of all minimal submodules of $R$. Then $\mathcal{M}$ is a minimal dominating set of $SSI(M)$ and $\gamma(SSI(M)) \leq |\mathcal{M}|$. Moreover, $\gamma(SSI(M)) = 1$ if and only if $M$ has exactly one minimal submodule or $M$ has exactly two minimal submodules $N_1$ and $N_2$ such that $N_1 + N_2$ is a non-trivial maximal submodule of $M$ such that there is no non-second submodule
properly contained in $N_1 + N_2$. Moreover, if $M$ has exactly two minimal submodules which does
not satisfy the above condition, then $\gamma(SSI(M)) = 2$.
\end{thm}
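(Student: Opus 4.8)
The plan is to verify the four assertions in turn, leaning throughout on Theorem \ref{2.2} for the universal-vertex dichotomy and on the fact (already used in Theorem \ref{2.5}) that every minimal submodule of $M$ is second. First I would show that $\mathcal{M}$ is a dominating set. Let $N$ be any vertex of $SSI(M)$ with $N \notin \mathcal{M}$; then $N$ is not minimal, so by hypothesis it properly contains a minimal submodule $K$, which is itself a non-zero proper submodule and hence a vertex. Since $K \subseteq N$ we have $N \cap K = K$, and $K$ is second, so $N$ is adjacent to $K \in \mathcal{M}$. Thus every vertex outside $\mathcal{M}$ has a neighbour in $\mathcal{M}$.

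The key observation for minimality is that $\mathcal{M}$ is an independent set: for two distinct minimal submodules $M_i, M_j$ one has $M_i \cap M_j = 0$ by minimality, and $0$ is not second, so $M_i$ and $M_j$ are non-adjacent. Consequently, for each $M_i \in \mathcal{M}$ the set $\mathcal{M}\setminus\{M_i\}$ contains no neighbour of $M_i$, so it fails to dominate $M_i$; hence no proper subset of $\mathcal{M}$ is a dominating set, i.e. $\mathcal{M}$ is a minimal dominating set. Since the domination number is at most the cardinality of any dominating set, this gives $\gamma(SSI(M)) \le |\mathcal{M}|$.

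For the characterisation of $\gamma = 1$ I would use the elementary fact that, for a graph with at least one vertex, $\gamma(G) = 1$ if and only if $G$ possesses a universal vertex (a singleton $\{v\}$ dominates precisely when $v$ is adjacent to every other vertex). Thus $\gamma(SSI(M)) = 1$ is equivalent to $SSI(M)$ having a universal vertex, and Theorem \ref{2.2} translates this directly into the stated dichotomy: either $M$ has a unique minimal submodule, or $M$ has exactly two minimal submodules $N_1, N_2$ with $N_1 + N_2$ a maximal submodule containing no non-second submodule properly.

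Finally, suppose $M$ has exactly two minimal submodules $N_1, N_2$ \emph{not} satisfying condition (b). Then neither alternative of Theorem \ref{2.2} holds (condition (a) fails because there are two minimal submodules), so $SSI(M)$ has no universal vertex and therefore $\gamma(SSI(M)) \ge 2$; on the other hand $\mathcal{M} = \{N_1, N_2\}$ is a dominating set of size $2$ by the first two steps, so $\gamma(SSI(M)) \le 2$, and combining these yields $\gamma(SSI(M)) = 2$. I expect the only genuinely delicate point to be the independence argument of the second paragraph, which underpins both the minimality of $\mathcal{M}$ and the lower bound $\gamma \ge 2$; once that and the equivalence $\gamma = 1 \Leftrightarrow$ existence of a universal vertex are in hand, the remaining assertions are immediate reductions to Theorem \ref{2.2}.
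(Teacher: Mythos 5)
Your proposal is correct and follows essentially the same route as the paper's proof: domination because every vertex contains a minimal (hence second) submodule, minimality because $\mathcal{M}\setminus\{S\}$ fails to dominate $S$, and the $\gamma(SSI(M))=1$ and $\gamma(SSI(M))=2$ claims reduced to the universal-vertex characterisation of Theorem \ref{2.2}. The only difference is that you spell out the details the paper leaves implicit, namely the independence of $\mathcal{M}$ (distinct minimal submodules meet in $0$, which is not second) and the equivalence $\gamma(G)=1 \Leftrightarrow G$ has a universal vertex.
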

\begin{proof}
Since any submodule $N$ of $M$ is contains some element $S$ of $\mathcal{M}$ and $N\cap S = S$,
which is a second submodule, $\mathcal{M}$ dominates $SSI(M)$. Let $S \in \mathcal{M}$. It is to be observed that
$\mathcal{M}\setminus \{S\}$ does not dominate $S$ and so fails to dominate $SSI(M)$. Thus $\mathcal{M}$ is a
minimal dominating set of $SSI(M)$ and $\gamma(SSI(M)) \leq |\mathcal{M}|$. The second and third parts follow
from Theorem \ref{2.2}.
\end{proof}

\section{Prime submodule sum graph of a module}
\noindent
\begin{defn}\label{9.1}
Let $M$ be an $R$-module.
The \textit{prime submodule sum graph} of $M$, denoted
by $SSI(M)$, is an undirected simple graph whose vertices are non-zero proper submodules of $M$ and two distinct
vertices $N$ and $K$ are adjacent if and only if $N + K$ is a prime submodule of $M$.
\end{defn}

An $R$-module $M$ is said to be a \emph{multiplication module} if for every submodule $N$ of $M$ there exists an ideal $I$ of $R$ such that $N=IM$ \cite{Ba81}.
\begin{prop}\label{9.7}
Let $M$ be a multiplication $R$-module. Then two distinct non-trivial submodules $N$ and $K$ of $M$ are adjacent in $PSS(M)$ if and only if $(N:_RM)$ and $(K:_RM)$ are adjacent in $PIS(R)$.
\end{prop}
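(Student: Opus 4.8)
The plan is to mirror the comultiplication analogue (Proposition~\ref{2.7}), replacing the annihilator correspondence by the colon correspondence for multiplication modules. The single external fact I would invoke is the multiplication-module analogue of the one used there: in a multiplication $R$-module $M$, a proper submodule $P$ of $M$ is a prime submodule if and only if $(P:_RM)$ is a prime ideal of $R$ (the standard correspondence for multiplication modules, see \cite{Ba81}). Everything else is a matter of translating the sum of submodules into the sum of the associated colon ideals.

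First I would record the elementary dictionary. Since $M$ is a multiplication module, every submodule obeys $N=(N:_RM)M$. Because $N$ and $K$ are non-zero and proper, this forces $(N:_RM)$ and $(K:_RM)$ to be non-zero (else $N$ or $K$ would be $0$) and proper (else $N$ or $K$ would equal $M$), so they are genuine vertices of $PIS(R)$; and from $N\neq K$ one gets $(N:_RM)\neq(K:_RM)$, since equal colons would force $N=(N:_RM)M=(K:_RM)M=K$. Writing $I=(N:_RM)$ and $J=(K:_RM)$, the relations $N=IM$ and $K=JM$ give $N+K=(I+J)M$ at once.

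The heart of the argument is the colon–sum identity
\[
((N+K):_RM)=(N:_RM)+(K:_RM).
\]
The inclusion ``$\supseteq$'' is immediate, and I expect the reverse inclusion to be the main obstacle: it is exactly here, beyond the prime correspondence, that the multiplication hypothesis is really used. I would prove it by showing that $I+J$ is a \emph{closed} ideal in the sense that $((I+J)M:_RM)=I+J$. Both $I$ and $J$ are closed (since $IM=N$, $JM=K$) and both contain $\mathrm{Ann}_R(M)$; the point needing care is that the sum of closed ideals need not be closed for a general closure operator. When $M$ is finitely generated this is transparent via localization, since each $M_{\mathfrak m}$ is cyclic, colon commutes with localization, and over a quotient ring the identity is trivial, so the local–global principle returns the global statement; in general one argues closedness of $I+J$ directly.

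With the identity available, the equivalence chain closes immediately:
\begin{align*}
N,\,K \text{ adjacent in } PSS(M)
&\iff N+K \text{ is a prime submodule of } M\\
&\iff ((N+K):_RM) \text{ is a prime ideal of } R\\
&\iff (N:_RM)+(K:_RM) \text{ is a prime ideal of } R\\
&\iff (N:_RM),\,(K:_RM) \text{ adjacent in } PIS(R),
\end{align*}
where the second equivalence is the cited prime correspondence applied to the proper submodule $N+K$, the third is the colon–sum identity, and the outer two are the definitions of $PSS(M)$ and $PIS(R)$. Note that $N+K$ is proper precisely when $(N:_RM)+(K:_RM)\neq R$, so the ``proper/prime'' bookkeeping matches on both sides, completing the argument.
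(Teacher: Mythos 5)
Your overall route is the same as the paper's: the paper's entire proof consists of citing the prime correspondence for multiplication modules (\cite[Corollary 2.11]{BS89}), which is exactly the external fact you invoke. The real added value of your write-up is that you isolate explicitly what the paper passes over in silence, namely the colon--sum identity $((N+K):_RM)=(N:_RM)+(K:_RM)$, and your localization proof of it when $M$ is finitely generated is correct: finitely generated multiplication modules are locally cyclic, colons commute with localization for finitely generated modules, and the identity is immediate for cyclic modules.

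However, your final clause ``in general one argues closedness of $I+J$ directly'' is not an argument, and the gap it papers over is genuine: the identity, and indeed the proposition itself, fail for multiplication modules that are not finitely generated. Take $R\subseteq\prod_{i\in\mathbb{N}}\mathbb{F}_2$ to be the ring of eventually constant sequences and $M$ the ideal of finitely supported sequences, viewed as an $R$-module. Every $R$-submodule of $M$ has the form $N_A=\{x\in M:\mathrm{supp}(x)\subseteq A\}$ for some $A\subseteq\mathbb{N}$, and $N_A=N_AM$ because every element of $R$ is idempotent, so $M$ is a multiplication module. Let $N=N_A$ and $K=N_B$ with $A$ the even and $B$ the odd indices. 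Since an eventually constant sequence supported in a coinfinite set is eventually zero, $(N:_RM)=N$ and $(K:_RM)=K$ as ideals of $R$; hence $(N:_RM)+(K:_RM)=M$, which is a prime (indeed maximal) ideal of $R$ because $R/M\cong\mathbb{F}_2$ via the eventual-value map, so $(N:_RM)$ and $(K:_RM)$ are adjacent in $PIS(R)$. But $N+K=M$ is not a proper submodule, hence not a prime submodule, so $N$ and $K$ are not adjacent in $PSS(M)$. (Choosing instead $A\cup B=\mathbb{N}\setminus\{1\}$ kills the other direction: $N+K$ is then a prime submodule of $M$, while $R/\bigl((N:_RM)+(K:_RM)\bigr)\cong\mathbb{F}_2\times\mathbb{F}_2$, so the ideal sum is not prime.) So your instinct about where, ``beyond the prime correspondence, the multiplication hypothesis is really used'' was exactly right, but the obstacle is fatal at the stated level of generality: no argument can close it, and the paper's one-line proof silently assumes the very same identity and shares the defect. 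Your proof is complete and correct precisely under the additional hypothesis that $M$ is finitely generated, which is the hypothesis under which this proposition ought to be stated.
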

\begin{proof}
This follows from the fact that in the multiplication $R$-module $M$, we have $P$ is a prime submodule of $M$ if and only if
$(P:_RM)$ be a prime ideal of $R$ \cite[Corollary 2.11]{BS89}.
\end{proof}

\begin{thm}\label{9.2}
Let $M$ be an $R$-module such that every submodule of $M$ is contained in a maximal submodule. Then
$PSS(M)$ has a universal vertex if and only if one of the two statements hold:
\begin{itemize}
\item [(a)] $M$ has exactly one maximal submodule.
\item [(b)] $M$ has exactly two maximal submodules $K_1$ and $K_2$ such that $K_1 \cap K_2$ is a
minimal submodule and that there is no non-prime submodule that properly containing in $K_1\cap K_2$.
\end{itemize}
\end{thm}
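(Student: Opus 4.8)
The plan is to mirror the structure of the proof of Theorem~\ref{2.2}, but dualized: where that theorem used minimal submodules, intersections, and second submodules, this one uses maximal submodules, sums, and prime submodules. The statement has the same logical shape (a universal vertex exists iff one of two configurations holds), so I expect the overall skeleton to carry over, with the duality intersection~$\leftrightarrow$~sum and minimal~$\leftrightarrow$~maximal built in. Note that the hypothesis is now that every submodule is \emph{contained in} a maximal submodule (rather than containing a minimal one), which is the dual assumption and is exactly what makes the dualization go through.

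First I would prove the two easy directions $(a)\Rightarrow$ and $(b)\Rightarrow$. If (a) holds with unique maximal submodule $K$, then for every non-trivial submodule $N$ we have $N+K=K$, which is prime since every maximal submodule is prime; hence $K$ is adjacent to every other vertex, i.e.\ $K$ is universal. If (b) holds, set $K:=K_1\cap K_2$ and let $H$ be any non-trivial submodule other than $K_1$, say with $H\subseteq K_1$ (the dual of ``$M_1\subseteq H$''); since $K_1\cap K_2$ is minimal I would argue that $K_1\cap K_2 \subseteq K_1 + H$ lies properly inside $K_1$, and then invoke the hypothesis in (b) that no non-prime submodule is properly contained in $K_1\cap K_2$ to conclude the relevant sum is prime. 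This is the one spot where I would need to double-check the exact containments, because the sum $K_1+H$ does not dualize the intersection argument quite as cleanly as one might hope.

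For the converse, suppose $N$ is a universal vertex. Since two distinct maximal submodules cannot be adjacent (their sum is $M$, which is not proper, hence not a prime submodule), $N$ cannot itself be a maximal submodule, so $N$ is properly contained in some maximal submodule. Assuming $M$ has at least three maximal submodules $K_1,K_2,K_3$, I would show each must contain $N$: if say $K_2\not\supseteq N$, then by maximality $N+K_2=M$, which is not prime, so $N$ is not adjacent to $K_2$, a contradiction. Thus $N\subseteq K_1\cap K_2\cap K_3$. Then I would split into the two dual cases: if $N\neq K_1\cap K_2\cap K_3$, the sum $N+(K_1\cap K_2\cap K_3)=K_1\cap K_2\cap K_3$ would have to be a prime submodule, and I would derive a contradiction (the dual of the $sec$/annihilator computation in Theorem~\ref{2.2}, now using primeness and the fact that an intersection of three distinct maximals cannot be prime); if $N=K_1\cap K_2\cap K_3$, then setting $T=K_1\cap K_2$ the sum $N+T=T$ is not prime since an intersection of two maximals is not prime, contradicting universality. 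This forces exactly two maximal submodules, and a final argument (dualizing the ``$N$ is maximal / no bad $K$'' paragraph) shows $N=K_1\cap K_2$ is minimal with no non-prime submodule properly contained in it.

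The hard part will be the two annihilator-style contradictions in the converse, specifically showing that a proper intersection of distinct maximal submodules cannot be a prime submodule. In Theorem~\ref{2.2} the analogous fact (a sum of distinct minimal submodules is not second) was handled via the second-socle/annihilator inclusion $M_1+M_2+M_3\subseteq(0:_M \mathrm{Ann}_R(M_1)\mathrm{Ann}_R(M_2)\mathrm{Ann}_R(M_3))$; the dual statement for primeness is less standard, so I expect this to require the most care, possibly an extra lemma characterizing when such an intersection is prime rather than a direct one-line dualization. The remaining steps are routine once the non-adjacency of distinct maximal submodules and the primeness-failure of short intersections are established.
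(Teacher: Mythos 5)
Your converse follows the paper's proof exactly (maximal submodules are pairwise non-adjacent since their sum is $M$; a universal vertex $N$ must therefore lie inside every maximal submodule; then the case split on whether $N=K_1\cap K_2\cap K_3$), and your direction (a) is identical to the paper's. But two things break. First, you read the condition in (b) in the wrong direction, and this propagates through the whole proposal. Because $K_1\cap K_2$ is minimal, ``no non-prime submodule is properly contained in $K_1\cap K_2$'' is vacuous: the only submodule properly inside a minimal submodule is $0$. The condition the paper states, uses, and proves is that no proper non-prime submodule properly \emph{contains} $K_1\cap K_2$. That is exactly what drives the (b)$\Rightarrow$universal direction: set $N=K_1\cap K_2$, take a vertex $H\neq N$ with (say) $H\subseteq K_1$; minimality of $N$ gives $H\not\subseteq N$, hence $N\subset N+H\subseteq K_1$, and the hypothesis about submodules \emph{above} $N$ yields that $N+H$ is prime, so $N$ is adjacent to $H$. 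A hypothesis about submodules below $N$ can say nothing about $N+H$, which sits above $N$. Compounding this, the sum you wrote down is the wrong one: for $H\subseteq K_1$ one has $K_1+H=K_1$, so ``$K_1\cap K_2\subseteq K_1+H$ lies properly inside $K_1$'' is false as stated; the relevant sum is $(K_1\cap K_2)+H$. You flagged this spot as needing a check, but as written the step fails, and the same reversed condition recurs at the end of your converse (``minimal with no non-prime submodule properly contained in it''), where what must actually be ruled out is a non-prime $K$ with $K_1\cap K_2\subset K\subset K_i$.

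Second, you defer the crux --- that a proper intersection of distinct maximal submodules cannot be a prime submodule --- to ``possibly an extra lemma'' and never supply it, yet both Case 1 and Case 2 of the converse collapse without it. The paper commits to an argument at this point: in Case 1 it uses $K_1K_2K_3\subseteq K_1\cap K_2\cap K_3$, so that primeness of the intersection forces it to absorb some $K_i$, whence $K_1=K_2=K_3$ by maximality, a contradiction; in Case 2 it invokes the two-maximal version of the same fact. Your instinct that this is the delicate point is sound --- it is precisely where the ideal-theoretic argument does not transfer verbatim to prime submodules --- but a proposal that leaves the pivotal lemma unproved has not proved the theorem.
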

\begin{proof}
Let (a) hold and $K$ be the maximal submodule of $M$. Then for each submodule $N$ of $M$, we have $N + K = K$, which is a prime submodule and hence $N$ is adjacent to $K$. Thus, $M$ is a universal vertex.

Let (b) hold and set $N:= K_1\cap  K_2$. Assume that $T$ is a non-trivial submodule other than $N$
and without loss of generality, let $T\subseteq K_1$. Since by assumption, $N$ is a minimal submodule, we have
$N\subset N + T\subseteq K_1$. Now, by assumption, $N+ K$ is a prime submodule of $M$ and so $N$ is a universal vertex.

Conversely, let $PSS(M)$ have a universal vertex, say $N$. If $K$ has a unique maximal
submodule, the proof is done. Now, assume that $M$ has at least three maximal submodules, say
$K_1,K_2$ and $K_3$. Note that $N$ cannot be a maximal submodule as two distinct maximal
submodules are not adjacent. Since $N$ is not a maximal submodule, then it is contained in a maximal submodule, say $K_1$. If possible, let $N \not \subseteq K_2$. Then by maximality of $K_2$, we have $N + K_2= M$. Thus $N$ is not adjacent to $K_2$, a contradiction. Therefore, $N\subseteq K_1\cap K_2\cap K_3$. Now, one of the following two cases holds.

\textbf{Case 1.} Let $N \not= K_1 \cap K_2\cap K_3$. Since $N+ (K_1 \cap K_2\cap K_3)=K_1 \cap K_2\cap K_3$
and $N$ is a universal vertex in $PSS(M)$, we get that $K_1 \cap K_2\cap K_3$ is a prime submodule of $M$.
Thus $K_1 K_2 K_3\subseteq K_1 \cap K_2\cap K_3$ implies that $K_1=K_2=K_3$, which is a contradiction.

\textbf{Case 2.} Let $N = K_1\cap K_2\cap K_3$.  Set $T = K_1\cap K_2$. Since the intersection of two maximal
submodules cannot be a prime submodule, $N + T = T$ is not a prime submodule, i.e.  $N$ is not adjacent to $T$, which contradicts
with our assumption that $N$ is a universal vertex. Therefore, $M$ has exactly two
maximal submodules, say $K_1$ and $K_2$. By the same argument as above, we conclude that
$N = K_1 \cap K_2$. Now, we show that $N$ is a minimal submodule. If possible, let there exists a non-trivial
submodule $K$ of $M$ such that $K\subset K_1 \cap K_2=N$. Then, as $N$ is a universal vertex, $K + N = N = K_1 \cap K_2$ is a
prime submodule of $M$, which is a contradiction. Thus, $N$ is a minimal submodule.
If possible, let $K$ be a non-prime submodule such that $N = K_1 \cap K_2 \subset K \subset K_i$, where
$i = 1$ or $2$. But, it follows that $N + K = K$, a non-prime submodule and hence $N$ is not adjacent to $K$, a
contradiction. Thus, there does not exist such submodule $K$ and the proof is completed.
\end{proof}

Recall that a submodule $N$ of an $R$-module $M$ is said to be \textit{small} in $M$ if $N+L \not=M$ for every proper submodule $L$ of $M$ \cite{AF74}. Also, $M$ is said to be a \textit{hollow module} if every proper submodule of $M$ is small \cite{FL74}.

\begin{prop}\label{9.8}
Let $M$ be an $R$-module. If $M$ is a hollow $R$-module such that every proper submodule of $M$ is prime, then $PSS(M)$ is a complete graph.
\end{prop}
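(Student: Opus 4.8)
The plan is to verify the defining adjacency condition directly for every pair of distinct vertices. Recall that the vertices of $PSS(M)$ are exactly the non-zero proper submodules of $M$, and two such submodules $N$ and $K$ are adjacent precisely when $N+K$ is a prime submodule of $M$. Hence it suffices to show that for any two distinct non-zero proper submodules $N$ and $K$, the sum $N+K$ is a prime submodule of $M$.

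First I would use the hollow hypothesis to control the sum. Fix distinct vertices $N$ and $K$. Since $M$ is hollow, every proper submodule of $M$ is small; in particular $N$ is small in $M$. By the definition of smallness, $N+L\neq M$ for every proper submodule $L$ of $M$. Applying this with $L=K$, which is a proper submodule because it is a vertex, yields $N+K\neq M$. Thus $N+K$ is a proper submodule of $M$, and it is also non-zero since it contains the non-zero submodule $N$.

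Next I would invoke the second hypothesis. Because $N+K$ is a proper submodule of $M$ and, by assumption, every proper submodule of $M$ is prime, the submodule $N+K$ is prime. Therefore $N$ and $K$ are adjacent in $PSS(M)$. As $N$ and $K$ were arbitrary distinct vertices, every pair of distinct vertices is adjacent, so $PSS(M)$ is a complete graph.

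The argument is short, and the only point that requires care is the step guaranteeing $N+K\neq M$: the primeness hypothesis can be applied only once we know the sum is a \emph{proper} submodule, and it is precisely the smallness provided by the hollow condition, rather than the mere properness of $N$ and $K$ individually, that secures this. I do not anticipate any genuine obstacle beyond keeping that logical dependence straight; note in particular that the conclusion does not require $N$ and $K$ to be non-comparable, since even when one contains the other the sum is still a proper submodule and hence prime.
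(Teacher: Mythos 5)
Your proof is correct and is exactly the ``straightforward'' argument the paper has in mind (the paper's own proof is just the phrase ``This is straightforward''): hollowness makes $N+K$ proper, and the hypothesis then makes it prime, giving adjacency. You also correctly identified the one point of care, namely that properness of $N+K$ must be secured via smallness before the primeness hypothesis can be applied.
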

\begin{proof}
This is straightforward
\end{proof}

\begin{defn}
We defined the subgraph $\widetilde{SSI(M)}$ of the graph $PIS(R)$ whose vertices are $Ann_R(N)$ for all submodules $N$ of $M$ with $Ann_R(N)$ is a non-trivial ideal of $R$ and
two distinct vertices $Ann_R(N)$ and $Ann_R(K)$ are adjacent if and only if $Ann_R(N)+Ann_R(K)$ is a prime ideal of $R$.
 \end{defn}

\begin{thm}\label{9.9}
Let $M$ be an $R$-module. If $M$ is a faithful finitely generated multiplication $R$-module such that $PSS(M)$ is a complete garph, then $\widetilde{SSI(M)}$ is a complete graph.
\end{thm}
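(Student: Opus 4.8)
The plan is to transport the completeness of $PSS(M)$ across the submodule–ideal correspondence available in a faithful finitely generated multiplication module. Given two distinct vertices $Ann_R(N)$ and $Ann_R(K)$ of $\widetilde{SSI(M)}$ (so these are non-trivial ideals of $R$), I would first promote each ideal to a submodule by multiplying through $M$: set $N' := Ann_R(N)M$ and $K' := Ann_R(K)M$. The first task is to verify that $N'$ and $K'$ are legitimate vertices of $PSS(M)$, i.e.\ non-zero proper submodules. Non-zeroness comes from faithfulness: if $0 \neq r \in Ann_R(N)$, then $r \notin Ann_R(M) = 0$, so $rM \neq 0$ and hence $Ann_R(N)M \neq 0$. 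Properness comes from finite generation: if $Ann_R(N)M = M$, the determinant (Nakayama) trick produces $a \in Ann_R(N)$ with $(1-a)M = 0$, whence $1-a \in Ann_R(M) = 0$ and $Ann_R(N) = R$, contradicting that $Ann_R(N)$ is a proper ideal.

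Next I would invoke the cancellation property enjoyed by finitely generated faithful multiplication modules, namely $IM = JM \Rightarrow I = J$. This yields two things at once. First, since $Ann_R(N) \neq Ann_R(K)$, we obtain $N' \neq K'$, so $N'$ and $K'$ are genuinely two \emph{distinct} vertices. Second, combined with the multiplication identity $L = (L:_R M)M$ applied to $L = IM$, cancellation gives the key identity $(IM:_R M) = I$ for every ideal $I$; in particular $(N':_R M) = Ann_R(N)$ and $(K':_R M) = Ann_R(K)$.

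With these reductions in place the conclusion follows quickly. Because $PSS(M)$ is complete and $N', K'$ are distinct non-trivial submodules, they are adjacent in $PSS(M)$. By Proposition \ref{9.7} this is equivalent to $(N':_R M)$ and $(K':_R M)$ being adjacent in $PIS(R)$, i.e.\ to $(N':_R M) + (K':_R M)$ being a prime ideal of $R$. Substituting the identity from the previous step, $(N':_R M) + (K':_R M) = Ann_R(N) + Ann_R(K)$, so $Ann_R(N) + Ann_R(K)$ is a prime ideal. Hence $Ann_R(N)$ and $Ann_R(K)$ are adjacent in $\widetilde{SSI(M)}$, and since these were arbitrary distinct vertices, $\widetilde{SSI(M)}$ is complete.

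The main obstacle is the auxiliary module theory rather than the graph-theoretic bookkeeping: one needs on hand the standard facts about finitely generated faithful multiplication modules — that they are cancellation modules and that $IM = M$ forces $I = R$ — to guarantee that $Ann_R(N)M$ is a genuine (non-zero, proper, correctly labelled) vertex of $PSS(M)$ and that the round trip $I \mapsto IM \mapsto (IM:_R M)$ returns $I$. Once these are secured, Proposition \ref{9.7} performs the transporting essentially for free.
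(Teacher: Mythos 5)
Your proof is correct and takes essentially the same route as the paper: promote the two annihilators to the submodules $Ann_R(N)M$ and $Ann_R(K)M$, use completeness of $PSS(M)$ to make their sum a prime submodule, and pull this back through the multiplication-module correspondence to conclude that $Ann_R(N)+Ann_R(K)$ is a prime ideal. You are in fact more careful than the paper, which asserts that $Ann_R(N)M$ and $Ann_R(K)M$ are non-trivial and that $(Ann_R(N)M+Ann_R(K)M:_RM)=Ann_R(N)+Ann_R(K)$ without the faithfulness, Nakayama, and cancellation arguments you supply, and which never verifies that the two promoted submodules are distinct vertices.
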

\begin{proof}
Let $M$ be a finitely generated multiplication $R$-module with $PSS(M)$ is a complete garph. Assume that $N$ and $K$ are two  submodules of $M$ such that $Ann_R(N)$ and $Ann_R(K)$ are non-trivial ideals of $R$. Then we get that  $Ann_R(N)M$ and $Ann_R(K)M$ are two non-trivial submodules of $M$. So $Ann_R(N)M+Ann_R(N)M$ is a prime submodule of $M$.
Hence $(Ann_R(N)M+Ann_R(N)M:_RM)=Ann_R(N)+Ann_R(K)$ is a prime ideal of $R$.
\end{proof}
An $R$-module $M$ is said to be a \emph{reduced module} if $rm = 0$ implies that $rM \cap Rm = 0$, where $r\in R$ and $m \in  M$ \cite{MR2050725}.

The intersection of all prime submodules of an $R$-module $M$ is said to be the \emph{prime radical} of $M$ and denote by $rad(M)$. In case $M$ does not contains any prime submodule, the prime radical of $M$ is defined to be $M$ \cite{Lu89}.
\begin{rem}\label{9.3}
Let $M$ be a non-reduced $R$-module such that every submodule of $M$ is contained in a maximal submodule of $M$.
Then $rad(M)$ is adjacent to each prime submodule of $M$.
\end{rem}
\begin{proof}
Since $M$ be a non-reduced $R$-module such that every submodule of $M$ is contained in a maximal submodule of $M$, we get that $rad(M)$ is a non-zero proper submodule of $M$.
For each prime submodule $P$ of $M$, we have $P+ rad(M)=P$ is a prime submodule of $M$. Hence,  $rad(M)$ is adjacent to each prime submodule of $M$.
\end{proof}

\begin{cor}\label{9.4}
Let $M$ be a non-reduced $R$-module such that every submodule of $M$ is contained in a maximal submodule of $M$.
Then $rad(M)$ is the only maximal submodule of $M$ if and only if $rad(M)$ is a universal vertex of $PSS(M)$.
\end{cor}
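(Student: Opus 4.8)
The plan is to obtain this as a direct consequence of Theorem \ref{9.2}, in complete analogy with the way Corollary \ref{2.4} follows from Theorem \ref{2.2}. The first step is to record, via Remark \ref{9.3}, that the standing hypotheses (that $M$ is non-reduced and that every submodule of $M$ lies inside a maximal submodule) already force $rad(M)$ to be a non-zero proper submodule of $M$; in particular $rad(M)$ is a genuine vertex of $PSS(M)$, and it is adjacent to every prime submodule of $M$.

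For the forward implication I would assume that $rad(M)$ is the unique maximal submodule. Then $M$ has exactly one maximal submodule, so alternative (a) of Theorem \ref{9.2} is met and $PSS(M)$ has a universal vertex; it remains to pin this vertex down as $rad(M)$ itself. Since every proper submodule $N$ of $M$ is contained in a maximal submodule and $rad(M)$ is the only one, we get $N \subseteq rad(M)$ and hence $N + rad(M) = rad(M)$. To see that this sum is prime I would argue that $rad(M)$ is itself prime: since $rad(M)$ is proper we have $rad(M) \neq M$, so by the convention defining $rad(M)$ the module $M$ must possess a prime submodule $P$, and the chain $rad(M) \subseteq P \subsetneq M$ together with the maximality of $rad(M)$ yields $rad(M) = P$; thus $rad(M)$ is the unique prime submodule of $M$ and is in particular prime. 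Consequently $N + rad(M) = rad(M)$ is prime for every vertex $N$, so $rad(M)$ is a universal vertex.

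For the converse I would assume $rad(M)$ is a universal vertex and feed this into Theorem \ref{9.2}, which leaves the two alternatives (a) and (b). The work is then to identify the universal vertex with $rad(M)$ and to settle alternative (b): adjacency of $rad(M)$ to an arbitrary maximal submodule $K$ means $rad(M) + K$ is prime, and since this sum is either $K$ or $M$ while $M$ is never a proper (hence never a prime) submodule, every maximal submodule is forced to be prime and to contain $rad(M)$. In case (b) this places $rad(M)$ inside the minimal submodule $K_1 \cap K_2$, and the non-vanishing of $rad(M)$ from Remark \ref{9.3} then gives $rad(M) = K_1 \cap K_2$, which must be reconciled with the universal-vertex hypothesis to collapse into alternative (a); that collapse is exactly what delivers the conclusion that $rad(M)$ is the only maximal submodule.

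I expect the main obstacle to be precisely this last identification. Theorem \ref{9.2} only guarantees the existence of \emph{some} universal vertex, whereas the corollary speaks about the specific submodule $rad(M)$; the delicate point is to show that under the present hypotheses the universal vertex is forced to be $rad(M)$, and thereby to exclude the two-maximal-submodule configuration of alternative (b), whose intersection is minimal rather than maximal. Once alternative (b) is ruled out, the remaining bookkeeping — that a maximal submodule is never enlarged to $M$ within a primeness condition and that $rad(M)$ sits below every prime submodule — is routine.
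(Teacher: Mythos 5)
Your forward implication is correct and complete: Remark \ref{9.3} makes $rad(M)$ a legitimate vertex under the standing hypotheses, and if $rad(M)$ is the unique maximal submodule then every vertex $N$ lies inside it, so $N+rad(M)=rad(M)$ is prime (your identification of $rad(M)$ as the unique prime submodule works, or one can simply note that every maximal submodule is prime). Since the paper's entire proof is the citation ``This follows from Theorem \ref{9.2}'', you have actually supplied more detail than the paper does for this half.

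The converse is where the genuine gap sits, and it is exactly the one you flagged yourself: you never show how to exclude alternative (b) of Theorem \ref{9.2}, saying only that the two-maximal-submodule configuration ``must be reconciled with the universal-vertex hypothesis to collapse into alternative (a).'' No such reconciliation is possible, because alternative (b) is genuinely compatible with $rad(M)$ being a universal vertex. Take $M=\mathbb{Z}_{12}$ as a $\mathbb{Z}$-module. Its prime submodules are exactly its two maximal submodules $2\mathbb{Z}_{12}$ and $3\mathbb{Z}_{12}$ (one checks that $4\mathbb{Z}_{12}$, $6\mathbb{Z}_{12}$ and $0$ all fail the prime condition), so $rad(M)=2\mathbb{Z}_{12}\cap 3\mathbb{Z}_{12}=6\mathbb{Z}_{12}\neq 0$; moreover $M$ is non-reduced (take $r=2$, $m=6$: $rm=0$ but $2M\cap(\mathbb{Z}\cdot 6)=\{0,6\}\neq 0$) and, being finite, every submodule lies in a maximal one, so all hypotheses of the corollary hold. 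Now $6\mathbb{Z}_{12}+4\mathbb{Z}_{12}=2\mathbb{Z}_{12}$, $6\mathbb{Z}_{12}+2\mathbb{Z}_{12}=2\mathbb{Z}_{12}$ and $6\mathbb{Z}_{12}+3\mathbb{Z}_{12}=3\mathbb{Z}_{12}$ are all prime, so $rad(M)$ is a universal vertex of $PSS(M)$; yet $M$ has two maximal submodules and $rad(M)$ is minimal, not maximal. So the implication ``universal vertex $\Rightarrow$ unique maximal submodule'' is false as stated: the obstacle you identified is not a technical difficulty to be overcome but an actual counterexample, realizing precisely case (b) of Theorem \ref{9.2}. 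The fault lies in the corollary itself (and in the paper's one-line proof, which glosses over the same point); it would have to be weakened, e.g.\ by adding the disjunct corresponding to case (b) exactly as Theorem \ref{9.13} does, before any proof of the converse could succeed.
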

\begin{proof}
This follows from Theorem \ref{9.2}.
\end{proof}

\begin{thm}\label{9.5}
Let $M$ be an $R$-module such that every submodule of $M$ is contained in a maximal submodule of $M$. Then
a submodule $N$ of $M$ is an isolated vertex in $PSS(M)$ if and only if $N$ is
a maximal as well as minimal submodule of $M$.
\end{thm}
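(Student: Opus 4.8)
The plan is to mirror the argument of Theorem \ref{2.5}, replacing the triple "minimal / second / intersection" by its dual "maximal / prime / sum". The single structural fact I would record at the outset is that every maximal submodule of $M$ is prime, which plays the role that "every minimal submodule is second" played in Theorem \ref{2.5}. Indeed, if $P$ is maximal and $rm \in P$ with $m \notin P$, then $P + Rm = M$ by maximality, so every $x \in M$ can be written $x = p + am$ with $p \in P$ and $a \in R$, whence $rx = rp + a(rm) \in P$; thus $rM \subseteq P$, i.e. $r \in (P :_R M)$, and $P$ is prime. I would state this first so that both sums produced below are automatically prime.

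For the forward direction, suppose $N$ is an isolated vertex of $PSS(M)$. First I would show $N$ is maximal: if it were not, then by the standing hypothesis $N$ lies properly inside some maximal submodule $K$, and $N + K = K$ is maximal, hence prime, so $N-K$ is an edge of $PSS(M)$, contradicting isolation. Next I would show $N$ is minimal: supposing some vertex $T$ satisfied $0 \subset T \subset N$, then $N + T = N$, which is prime now that $N$ is known to be maximal, giving the edge $N-T$ and again contradicting isolation.

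For the converse, assume $N$ is both maximal and minimal, and suppose toward a contradiction that some vertex $T \ne N$ is adjacent to $N$; thus $N + T$ is prime, and in particular proper. Since $N \subseteq N + T \subseteq M$ and $N$ is maximal, we must have either $N + T = M$ or $N + T = N$. The first is impossible, because $M$ is not a proper submodule and hence not prime by definition. The second forces $T \subseteq N$, and since $T$ is a vertex we have $T \ne 0$ and $T \ne N$, so $0 \subset T \subset N$, contradicting the minimality of $N$. Hence no such $T$ exists and $N$ is isolated.

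All four steps are short, and I do not anticipate a genuine obstacle. The only point deserving care is the standing observation that maximal submodules are prime: it is exactly what makes the sums $N + K = K$ and $N + T = N$ prime in the two implications, and without it the argument does not close. Everything else is the two-case analysis forced by the inclusion chain $N \subseteq N + T \subseteq M$ together with the definitions of maximal and minimal.
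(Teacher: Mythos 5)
Your proposal is correct and follows essentially the same route as the paper's own proof: show maximality first (via the ambient maximal submodule $K$ with $N+K=K$), then minimality (via $N+T=N$), and for the converse run the two-case analysis $T \not\subseteq N$ versus $T \subseteq N$. The only difference is cosmetic: you make explicit the standard fact that maximal submodules are prime, which the paper uses silently, and you phrase the converse through the chain $N \subseteq N+T \subseteq M$ rather than by cases on whether $T \subseteq N$; both are the same argument.
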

\begin{proof}
Let $N$ be an isolated vertex in $PSS(M)$. If $N$ is not a maximal submodule, then it is
contained in a maximal submodule, say $K$ and $N + K = K$. Thus $N$ is adjacent to $K$ in $PSS(M)$,
a contradiction, and so $N$ is a maximal submodule. If $N$ is not a minimal submodule, then there
exists a submodule $T$ of $M$ such that $0\not=T \subset N$ and $N+ T =N$, which is a
prime submodule. Hence $N$ is adjacent to $T$, a contradiction, and so $N$ is a minimal submodule of $M$.

Conversely, let $N$ be a maximal as well as minimal submodule of $M$. If possible, assume
that $N$ is not isolated in $PSS(M)$. Then there exists a non-zero proper submodule $T$ of $M$ other than $N$ such that $N+ T$ is a  prime submodule. If $T \not \subseteq N$, then as $N$ is maximal submodule of $M$, we
have $N+ T =M$, which is not a prime submodule. On the other hand if $T \subseteq N$, then by minimality
of $N$, we have $T =0$ or $N = T$, a contradiction. Thus such a submodule $T$ does not
exist and hence $N$ is an isolated vertex in $PSS(M)$.
\end{proof}

\begin{thm}\label{9.6}
Let $M$ be an $R$-module such that every submodule of $M$ is contained in a maximal submodule of $M$. Then
we have the following.
\begin{itemize}
\item [(a)] If $PSS(M)$ is a complete graph, then $M$ has exactly one maximal submodule and every proper
non-prime submodule is a minimal submodule.
\item [(b)] If $M$ has exactly one maximal submodule, then $PSS(M)$ is a complete graph.
\end{itemize}
\end{thm}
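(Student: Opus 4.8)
The plan is to read this statement as the exact dual of Theorem~\ref{2.6}, under the correspondences (minimal $\leftrightarrow$ maximal), (second $\leftrightarrow$ prime) and ($\cap \leftrightarrow +$), and to extract the structural information from Theorem~\ref{9.2} in the same way that Theorem~\ref{2.6} extracted it from Theorem~\ref{2.2}. Throughout I will use the standard fact, already employed in the proof of Theorem~\ref{9.2}, that every maximal submodule of $M$ is prime.

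For part~(a) I would first observe that a complete graph certainly possesses a universal vertex, so Theorem~\ref{9.2} applies and one of its two alternatives must hold. I would then eliminate alternative~(b): if $M$ had two distinct maximal submodules $K_1$ and $K_2$, then $K_1 + K_2 = M$ by maximality, and since a prime submodule is proper by definition, $M$ itself is not prime; hence $K_1$ and $K_2$ are non-adjacent, contradicting completeness. Thus alternative~(a) holds and $M$ has exactly one maximal submodule. For the remaining assertion I would take a proper non-prime submodule $N$ and suppose it is not minimal; then there is a non-zero submodule $T$ with $T \subset N$, so that $N + T = N$ is not prime and $N$ is not adjacent to $T$, again contradicting completeness. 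Hence every proper non-prime submodule is minimal.

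For part~(b) the natural dual argument runs as follows. Let $K$ be the unique maximal submodule of $M$. Since every submodule of $M$ lies in some maximal submodule, $K$ contains every proper submodule, so for any two distinct proper non-zero submodules $N$ and $K'$ the sum $N + K'$ is contained in $K$ and is therefore proper; one then wishes to conclude that $N + K'$ is prime. I expect this last implication to be the main obstacle: mere containment in the prime submodule $K$ does not force $N + K'$ to be prime, and the conclusion is immediate only when $N + K' = K$. Indeed, for the $\Bbb Z$-module $\Bbb Z_{p^4}$ the unique maximal submodule is the only prime submodule, so the sum of its two smallest proper submodules fails to be prime; this cautionary check shows the delicate point must be handled via some structural feature of the hypotheses rather than by containment alone. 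A clean argument therefore seems to rest on the extra input that every proper submodule of $M$ be prime (as in the hollow setting of Proposition~\ref{9.8}) or that the sum of any two proper submodules already reaches $K$, in which case completeness of $PSS(M)$ follows at once. Pinning down precisely which such condition is operative is the crux of the proof.
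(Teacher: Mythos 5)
Your part (a) is correct and is essentially the paper's own argument: completeness gives a universal vertex, so Theorem \ref{9.2} applies; alternative (b) of that theorem is excluded because two distinct maximal submodules $K_1,K_2$ satisfy $K_1+K_2=M$, which is not prime; and minimality of every non-zero proper non-prime submodule $N$ follows by observing that any non-zero $T\subset N$ gives $N+T=N$ non-prime, contradicting completeness.

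For part (b) you have in fact done better than the paper. The paper's proof of (b) is precisely the non-sequitur you flagged: it argues that since the unique maximal submodule $K$ contains every proper submodule and maximal submodules are prime, $PSS(M)$ is complete --- but for two vertices $N,K'$ this only yields $N+K'\subseteq K$, and containment in a prime submodule does not make $N+K'$ prime. Your cautionary example is not merely a warning; it is a genuine counterexample refuting statement (b) as written. For $M=\Bbb Z_{p^4}$ the non-zero proper submodules form the chain $p^3M\subset p^2M\subset pM$, every proper submodule lies in the unique maximal submodule $pM$, and the only prime submodule is $pM$ itself (for instance $p\cdot p\in p^2M$ while $p\notin p^2M$ and $p\notin(p^2M:_{\Bbb Z}M)=p^2\Bbb Z$, so $p^2M$ is not prime); hence $p^2M+p^3M=p^2M$ is not prime, the vertices $p^2M$ and $p^3M$ are not adjacent, and $PSS(\Bbb Z_{p^4})$ is a path rather than a complete graph. (The same module refutes the dual claim, Theorem \ref{2.6}(b), for $SSI$.) So there is no hidden argument for you to find: a correct statement must strengthen the hypothesis, and indeed the right equivalence is that $PSS(M)$ is complete if and only if $M$ has exactly one maximal submodule \emph{and} every non-zero proper non-prime submodule is minimal --- the converse direction then goes through because any sum of two distinct vertices is proper (being contained in $K$), non-zero, and non-minimal, hence prime by hypothesis.
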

\begin{proof}
(a) Let $PSS(M)$ be a complete graph. Then by Theorem \ref{9.2}, $PSS(M)$ has a universal vertex and
hence either one of the two conditions holds. Since two distinct maximal
submodules cannot be adjacent, $M$ cannot have two maximal submodules. Hence part (a) of
Theorem \ref{9.2} holds. Let $N$ be a non-zero proper submodule of $M$ which is not prime. If
possible, there exists a submodule $T$ of $M$ such that $0\not= T \subset N$, then $N+ T = N$. Since
$N$ is not a prime submodule, $N$ is not adjacent to $T$, a contradiction to the completeness of $PSS(M)$. Thus
every proper non-prime submodule is a minimal submodule.

(b) Conversely, let $M$ has exactly one maximal submodule, say $K$. Since by assumption, every submodule of $M$ is contained in a maximal submodule, we have $K$ is contains every proper submodule of $M$. Now, as every maximal submodule of $M$ is a prime submodule of $M$, we have $PSS(M)$ is a complete graph.
\end{proof}

\begin{thm}\label{9.7}
Let $M$ be an $R$-module such that every submodule of $M$ is contained in a maximal submodule of $M$. Then $PSS(M)$ is connected if and only if $M$ is not a
direct sum of two of its maximal submodules. If $PSS(M)$ is connected, then diam$(PSS(M)) \leq 2$.
\end{thm}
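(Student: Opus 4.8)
The plan is to dualize the proof of Theorem~\ref{2.7}, trading intersections for sums, minimal submodules for maximal submodules, and second submodules for prime submodules. A preliminary reformulation of the hypothesis will drive the argument: for two \emph{distinct} maximal submodules $K_1,K_2$ one always has $K_1+K_2=M$, since $K_1\subsetneq K_1+K_2$ forces $K_1+K_2=M$ by maximality. Hence $M$ is a direct sum of two of its maximal submodules exactly when some two distinct maximal submodules satisfy $K_1\cap K_2=0$; equivalently, $M$ fails to be such a direct sum if and only if every two distinct maximal submodules meet non-trivially. This last form is what will supply a usable intermediate vertex.

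For the (harder) forward implication I would assume $M$ is not a direct sum of two maximal submodules and take distinct vertices $N_1,N_2$. If $N_1+N_2$ is prime then $N_1-N_2$. Otherwise each $N_i$ sits inside a maximal submodule. If a single maximal submodule $K$ contains both, then $N_1+K=K=N_2+K$ are prime (maximal submodules being prime), giving $N_1-K-N_2$; here $K\neq N_1,N_2$, for otherwise $N_1+N_2=K$ would be prime. If no maximal submodule contains both, pick maximal $K_1\supseteq N_1$, $K_2\supseteq N_2$ with $K_1\neq K_2$; then $N_1\not\subseteq K_2$ (else $K_2$ would contain both $N_i$), so $N_1+K_2=M$, and the modular law gives
\[
N_1+(K_1\cap K_2)=(N_1+K_2)\cap K_1=M\cap K_1=K_1,
\]
a prime submodule. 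By the reformulation $K_1\cap K_2\neq0$, so it is a bona fide vertex, distinct from $N_1$ (as $N_1\not\subseteq K_2$) and, symmetrically, from $N_2$; the mirror computation gives $(K_1\cap K_2)+N_2=K_2$. Thus $N_1-(K_1\cap K_2)-N_2$ is a path of length two. Since every case yields $d(N_1,N_2)\leq2$, this simultaneously proves connectivity and the bound $\mathrm{diam}(PSS(M))\leq2$.

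For the converse I would show that a direct-sum decomposition forces disconnection. If $M=K_1\oplus K_2$ with $K_1,K_2$ maximal, then $K_2\cong M/K_1$ is simple because $K_1$ is maximal, so $K_2$ is a minimal submodule of $M$, and symmetrically $K_1$ is minimal. Being simultaneously maximal and minimal, $K_1$ and $K_2$ are isolated vertices of $PSS(M)$ by Theorem~\ref{9.5}; as these are two distinct vertices, $PSS(M)$ is disconnected.

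The hard part will be the middle case of the forward implication. Everything hinges on knowing that the candidate intermediate vertex $K_1\cap K_2$ is an honest non-zero proper submodule distinct from both endpoints, and this is precisely where the hypothesis is consumed: ``$M$ is not a direct sum of two maximal submodules'' is exactly the guarantee that distinct maximal submodules intersect non-trivially. Should that intersection collapse to $0$, the path breaks---and this collapse is nothing but the disconnected configuration isolated in the converse, so the two directions meet exactly at this point.
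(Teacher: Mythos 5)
Your proof is correct and takes essentially the same route as the paper's: the same case analysis on whether $N_1,N_2$ lie in one or two maximal submodules, the same modular-law computation $N_1+(K_1\cap K_2)=(N_1+K_2)\cap K_1=K_1$ producing the path $N_1-(K_1\cap K_2)-N_2$, and the same converse via Theorem \ref{9.5} (maximal $+$ minimal $\Rightarrow$ isolated). Your write-up is in fact slightly tighter than the paper's, since you verify explicitly that the intermediate vertex is distinct from $N_1$ and $N_2$ and that $K\neq N_1,N_2$ in the common-maximal case, points the paper leaves implicit.
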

\begin{proof}
Let $N_1$, $N_2$ be two non-zero submodules of $M$. If $N_1+ N_2$ is a prime submodule, then $N_1- N_2$
in $PSS(M)$. Otherwise, assume that $N_1+ N_2$ is not a prime submodule. By assumption, both submodules $N_1$ and $N_2$ are contained in a maximal submodules of $M$. If they are contained in the same maximal submodule, say $K$, then we have $N_1 - K - N_2$, as maximal submodules
are prime and hence $d(N_1, N_2) = 2$. Thus, we assume that they are not contained in the same maximal submodules, $N_1\subset K_1$, $N_2\subset K_2$ and $K_1 \not= K_2$ are two maximal submodules of $M$.
First assume that $K_1\cap K_2 \not=0$. By the maximality of $K_2$, we have $K_2 + N_1=M$.
Then
$$
K_1=K_1 \cap M=K_1 \cap (K_2+N_1)=(N_1\cap K_1)+(K_1\cap K_2)=N_1+(K_1 \cap K_2)
$$
and so $N_1 -  K_1 \cap K_2$. Similarly,
 $K_1 \cap  K_2-N_2$. Thus we have a path of length 2 given
by $N_1 -  K_1 \cap K_2 - N_2$ and so  $d(N_1, N_2) \leq 2$. Hence,  diam$(PSS(M)) \leq 2$. If $K_1\cap  K_2 =0$, then since $K_1$ and $K_2$ are maximal submodules, we have $M=K_1 \oplus K_2$.

Conversely, if $M$ is direct sum of two maximal submodules $K_1$ and $K_2$, then
$K_1\cong M/K_2$ and $K_2\cong M/K_1$. Thus $K_1$ and $K_2$ are also minimal submodules of $M$. This in turn implies that
$PSS(M)$ have two isolated vertices $K_1$ and $K_2$. Thus $PSS(M)$ is not connected.
\end{proof}

\begin{cor}\label{9.8}
Let $M$ be a multiplication $R$-module such that $M$ is not a
direct sum of two of its maximal submodules. Then diam$(PSS(M)) \leq 2$.
\end{cor}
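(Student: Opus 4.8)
The plan is to derive the corollary from Theorem \ref{9.7}, exactly in the way Corollary \ref{2.8} is derived from Theorem \ref{2.7}. Theorem \ref{9.7} already delivers the conclusion $\mathrm{diam}(PSS(M)) \leq 2$ for any module in which every proper submodule is contained in a maximal submodule, provided $M$ is not a direct sum of two of its maximal submodules. Since the latter hypothesis is handed to us, the entire task reduces to verifying the containment hypothesis of Theorem \ref{9.7} for multiplication modules.

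First I would establish the key lemma: in a nonzero multiplication $R$-module $M$, every proper submodule $N$ is contained in a maximal submodule. This is the exact dual, for multiplication modules, of the property cited in the proof of Corollary \ref{2.8} (namely, that in a comultiplication module every submodule \emph{contains} a minimal submodule). One way to see it is to write $I = (N :_R M)$, so that $N = IM$ with $I$ a proper ideal; it then suffices to produce a maximal ideal $\mathfrak{m} \supseteq I$ with $\mathfrak{m}M \neq M$, for then $\mathfrak{m}M$ is a maximal submodule containing $N$. If no such $\mathfrak{m}$ existed, the nonzero multiplication module $\bar{M} := M/IM$ would satisfy $\mathfrak{m}\bar{M} = \bar{M}$ for \emph{every} maximal ideal $\mathfrak{m}$ of $R$; localizing at a maximal ideal where $\bar{M}$ does not vanish and invoking the local (cyclic) characterization of multiplication modules together with Nakayama's lemma would then force $\bar{M}_{\mathfrak{m}} = 0$, a contradiction.

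With this lemma in hand the corollary is immediate: $M$ satisfies the hypotheses of Theorem \ref{9.7}, and by assumption $M$ is not a direct sum of two of its maximal submodules, so Theorem \ref{9.7} yields both the connectedness of $PSS(M)$ and the bound $\mathrm{diam}(PSS(M)) \leq 2$. In practice I would simply cite the containment property from the standard references on multiplication modules (e.g.\ \cite{Ba81,BS89}), rather than reprove it.

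The only genuine obstacle is this first step. The delicacy is that a multiplication module need not be finitely generated, so one cannot obtain a maximal submodule above $N$ by a bare Zorn's lemma argument applied to $M$ itself; the existence really depends on the rigid ideal--submodule correspondence imposed by the multiplication hypothesis, as in the localization argument sketched above. Once that point is settled, no further work is required.
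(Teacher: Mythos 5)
Your proposal is correct and takes essentially the same route as the paper: the paper's proof also reduces the corollary to Theorem \ref{9.7} by citing \cite[Theorem 2.5]{BS89} for the fact that every proper submodule of a multiplication module is contained in a maximal submodule. Your localization/Nakayama sketch of that containment lemma is a sound reconstruction of the standard argument, but the paper (as you anticipated) simply cites the known result rather than reproving it.
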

\begin{proof}
Since every submodule of $M$ is contained in a maximal submodule by \cite[Theorem 2.5]{BS89}, the result follows from Theorem \ref{9.7}.
\end{proof}

\begin{cor}\label{9.998}
Let $M$ be an $R$-module such that every submodule of $M$ is contained a maximal submodule. If $PSS(M)$ is a connected graph, then the sum of any two minimal submodules of $M$ is equal to $M$.
\end{cor}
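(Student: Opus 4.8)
The plan is to argue by contradiction, transporting the proof of Corollary~\ref{2.998} to the dual situation by interchanging ``maximal'' with ``minimal'' and $\cap$ with $+$, and by replacing Theorem~\ref{2.5} with its $PSS$-analogue, Theorem~\ref{9.5}. Thus I would assume that $PSS(M)$ is connected and suppose, for contradiction, that there are two distinct minimal submodules $S_1$ and $S_2$ of $M$ with $S_1+S_2\neq M$. Since every submodule of $M$ lies in a maximal submodule, the aim is to locate a maximal submodule $K_1$ with $S_1\subseteq K_1$ but $S_2\not\subseteq K_1$, and then exploit minimality to reach a submodule that is at once minimal and maximal; by Theorem~\ref{9.5} such a submodule is isolated in $PSS(M)$, contradicting connectedness, and this contradiction is what would force $S_1+S_2=M$.

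The main computational step mirrors the modular-law identity driving Corollary~\ref{2.998}. If $K_1$ is a maximal submodule containing $S_1$ but not $S_2$, then minimality of $S_2$ gives $S_2\cap K_1=0$, and the modular law (applicable because $S_1\subseteq K_1$) yields $(S_1+S_2)\cap K_1=S_1+(S_2\cap K_1)=S_1$. The desired outcome is to conclude $S_1=K_1$, so that the minimal submodule $S_1$ is simultaneously maximal; Theorem~\ref{9.5} then produces the isolated vertex. I would also keep Theorem~\ref{9.7} in reserve, using both $\mathrm{diam}(PSS(M))\le 2$ and the characterisation of connectedness (``$M$ is not a direct sum of two of its maximal submodules'') to control the configuration of the maximal submodules lying above $S_1$ and $S_2$.

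The hard part is the passage from $(S_1+S_2)\cap K_1=S_1$ to $S_1=K_1$: this equality delivers $S_1=K_1$ only when $(S_1+S_2)\cap K_1=K_1$, that is, when $K_1\subseteq S_1+S_2$, which under maximality of $K_1$ means precisely $S_1+S_2=M$. In other words, the modular-law collapse produces a minimal-and-maximal submodule exactly in the borderline case $S_1+S_2=M$, so the crux of the statement is to show that, under connectedness, the proper-sum alternative is untenable. I therefore expect the main obstacle to be ruling out the configuration in which $S_1+S_2$ is properly contained in a maximal submodule that also contains $S_2$; this is where I would concentrate the work, combining the structural dichotomy of Theorem~\ref{9.7} with the isolated-vertex test of Theorem~\ref{9.5} to drive the sum up to all of $M$ and thereby complete the contradiction.
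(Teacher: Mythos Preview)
Your dualisation instinct is exactly right, but you are chasing the wrong conclusion because the printed statement contains a typo. The corollary is meant to be the dual of Corollary~\ref{2.998} (``the intersection of any two maximal submodules is non-zero''); the correct dual conclusion is that the sum of any two minimal submodules is \emph{not} equal to $M$, i.e.\ is a proper submodule. The paper's own proof confirms this: it opens with ``Assume contrary that $K_1$ and $K_2$ are two minimal submodules of $M$ such that $K_1+K_2=M$'' and derives a contradiction, so what is actually established is $K_1+K_2\neq M$.

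Once you flip the contradiction hypothesis, the obstacle you isolated disappears and your computation becomes the paper's proof verbatim. Assume $S_1+S_2=M$; take a maximal $P_1$ with $S_1\subseteq P_1$. From $S_1+S_2=M$ and $S_1\subseteq P_1$ one gets $S_2\not\subseteq P_1$, hence $S_2\cap P_1=0$ by minimality of $S_2$. Now the modular law (with $S_1\subseteq P_1$) gives
\[
S_1=S_1+(S_2\cap P_1)=(S_1+S_2)\cap P_1=M\cap P_1=P_1,
\]
so $S_1$ is simultaneously minimal and maximal, and Theorem~\ref{9.5} makes it an isolated vertex of $PSS(M)$, contradicting connectedness. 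No appeal to Theorem~\ref{9.7} or to the diameter bound is needed; the paper does not use them here either. In short, your plan is the paper's plan, and the ``hard part'' you flagged is an artefact of the misprinted conclusion rather than a genuine gap.
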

\begin{proof}
Let $PSS(M)$ be a connected graph.
Assume contrary that $K_1$ and $K_2$ are two minimal submodules of $M$ such that $K_1 + K_2=M$.
Since every submodule of $M$ is contained in amaximal submodule, there is a maximal submodule $P_1$ of $M$ such that $K_1 \subseteq P_1$.
Since  $K_1 + K_2=M$, $K_2 \not \subseteq P_1$. Thus as $K_2$ is minimal, $K_2\cap P_1=0$. Hence
$$
K_1=K_1+ 0=K_1 + (K_2\cap P_1)=(K_1 + P_1)\cap (K_1+ K_2)=P_1.
$$
Thus $K_1$ is a maximal and minimal submodule of $M$. Hence by Theorem \ref{9.5}, $K_1$ is an isolated vertex, which is a contradiction.
\end{proof}

\begin{thm}\label{9.9}
Let $M$ be an $R$-module such that $M$ have two non-comparable submodules which are adjacent in $PSS(M)$. Then
girth($PSS(M)$) = 3.
\end{thm}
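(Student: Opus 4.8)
The plan is to exhibit an explicit triangle in $PSS(M)$, mirroring the argument used for $SSI(M)$ in Theorem \ref{2.9}, with the sum operation now playing the role that intersection played there. First I would take the two non-comparable submodules $N_1$ and $N_2$ that are adjacent in $PSS(M)$, so that by definition $N_1 + N_2$ is a prime submodule of $M$, and I would propose $N_1 + N_2$ as the third vertex of the triangle.

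Next I would check that $N_1 + N_2$ is a legitimate vertex of $PSS(M)$, namely a non-zero proper submodule distinct from both $N_1$ and $N_2$. It is non-zero because $N_1$ and $N_2$ are non-zero, and it is proper because every prime submodule is by definition proper. It is distinct from $N_1$ and $N_2$ precisely because $N_1$ and $N_2$ are non-comparable: if $N_1 + N_2 = N_1$ then $N_2 \subseteq N_1$, contradicting non-comparability, and symmetrically $N_1 + N_2 \neq N_2$.

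Then I would verify the two remaining edges. Since $N_1 + (N_1 + N_2) = N_1 + N_2$ is prime, $N_1$ is adjacent to $N_1 + N_2$; likewise $N_2 + (N_1 + N_2) = N_1 + N_2$ is prime, so $N_2$ is adjacent to $N_1 + N_2$. Together with the given edge $N_1 - N_2$, the three distinct vertices $N_1$, $N_2$, $N_1 + N_2$ form a triangle, so $PSS(M)$ contains a cycle of length $3$. Since no simple graph admits a cycle of length less than $3$, this forces girth$(PSS(M)) = 3$.

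I do not anticipate a genuine obstacle here; the only point requiring any care is confirming that $N_1 + N_2$ is a bona fide vertex and is different from $N_1$ and $N_2$, which is exactly where the non-comparability hypothesis is used. Everything else is a direct translation of the $SSI(M)$ argument under the duality interchanging $\cap$ with $+$ and the notion \emph{second} with the notion \emph{prime}.
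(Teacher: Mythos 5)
Your proposal is correct and follows essentially the same argument as the paper: both exhibit the triangle $N_1$, $N_2$, $N_1+N_2$, using non-comparability to guarantee the three vertices are distinct. Your write-up simply makes explicit the details the paper leaves implicit (that $N_1+N_2$ is a non-zero proper submodule and that the two remaining edges come from $N_i+(N_1+N_2)=N_1+N_2$ being prime).
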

\begin{proof}
Let $N_1$ and $N_2$ be two non-comparable submodules which are adjacent in $PSS(M)$.
Then $N_1+ N_2$ is a prime submodule of $M$. Since $N_1$ and $N_2$ are non-comparable, then $N_1$, $N_2$, and $N_1+ N_2$ forms a triangle in $SSI(M)$, i.e. girth($PSS(M)$) = 3.
\end{proof}

\begin{cor}\label{9.10}
If $\SS(M)$ is acyclic or girth$(PSS(M)) > 3$, then no two non-comparable
submodules of $M$ are adjacent in $PSS(M)$ and adjacency occurs only in case
of comparable submodules, i.e. for any edge in $PSS(M)$, one of the terminal vertices is a
prime submodule of $M$.
\end{cor}\begin{cor}\label{9.10}
If $\SS(M)$ is acyclic or girth$(PSS(M)) > 3$, then no two non-comparable
submodules of $M$ are adjacent in $PSS(M)$ and adjacency occurs only in case
of comparable submodules, i.e. for any edge in $PSS(M)$, one of the terminal vertices is a
prime submodule of $M$.
\end{cor}

\begin{thm}\label{9.11}
Let $M$ be an $R$-module such that girth($PSS(M)) = n$. Then there exist at least
$\lfloor n/2 \rfloor$ distinct prime submodules in $M$.
\end{thm}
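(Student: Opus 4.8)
The plan is to mirror the structure of the proof of Theorem~\ref{2.11}, dualizing intersection/second to sum/prime throughout. First I would dispose of the case $n = 3$: if $PSS(M)$ contains two non-comparable adjacent vertices $N_1, N_2$, then by Theorem~\ref{9.9} we have $\mathrm{girth}(PSS(M)) = 3$, and the edge itself certifies that $N_1 + N_2$ is a prime submodule, so $M$ has at least $\lfloor 3/2 \rfloor = 1$ prime submodule. Hence I may assume $\mathrm{girth}(PSS(M)) = n > 3$, in which case Corollary~\ref{9.10} guarantees that every edge joins two comparable submodules.

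The key observation to record is that for comparable submodules $N \subseteq K$ one has $N + K = K$, so an edge $N - K$ forces the larger submodule $K$ to be prime. I would fix a shortest cycle $N_1 - N_2 - \cdots - N_n - N_1$ and note first that the containment relations cannot be monotone around the whole cycle (neither $N_1 \subseteq N_2 \subseteq \cdots \subseteq N_n \subseteq N_1$ nor the reverse chain), since either would force all $N_i$ equal. Consequently the cycle has a local maximum, so after relabelling I may assume $N_1 \subseteq N_2 \supseteq N_3$; then $N_2$ is the larger vertex on both incident edges and is therefore prime.

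The heart of the argument is then a walk around the cycle in blocks of consecutive vertices. Examining the edge $N_3 - N_4$ gives two cases: if $N_3 \subseteq N_4$, then $N_4$ is prime, so $\{N_2, N_4\}$ are two primes among $N_1,\dots,N_4$; if $N_4 \subseteq N_3$, then $N_3$ is prime, so $\{N_2, N_3\}$ are two primes among $N_1,\dots,N_4$. In either case each block of four consecutive vertices contributes at least two prime submodules, and iterating this along the cycle yields at least $\lfloor n/2 \rfloor$ prime submodules of $M$.

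The main obstacle I anticipate is the bookkeeping in the final step: one must ensure that the primes produced by successive blocks are genuinely distinct (the $N_i$ are pairwise distinct as vertices of a cycle, so the submodules labelled prime are distinct) and that the count survives the parity issue when $n$ is odd, so that the floor $\lfloor n/2 \rfloor$ is exactly attained. Making the block decomposition precise---especially handling the wrap-around edge $N_n - N_1$ and the overlap between consecutive blocks---is where care is required; everything else is a direct dualization of Theorem~\ref{2.11}.
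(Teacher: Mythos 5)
Your proposal is correct and takes essentially the same route as the paper's own proof: dispose of the girth-three case via Theorem~\ref{9.9}, reduce via Corollary~\ref{9.10} to the situation where every edge joins comparable submodules (where the larger submodule of an adjacent pair must be prime), locate a local maximum $N_1, N_3 \subseteq N_2$ on a shortest cycle, and run the same two-case analysis on $N_3$ versus $N_4$ repeatedly around the cycle to collect $\lfloor n/2 \rfloor$ primes. The bookkeeping issues you flag (distinctness of the harvested primes and the wrap-around/parity step) are left equally informal in the paper, whose proof simply says ``continuing in this manner till $N_n$.''
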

\begin{proof}
By Theorem \ref{9.9}, if two non-comparable submodules are adjacent in $SSI(M)$,
then girth($SSI(M) = 3$ and the sum of those two non-comparable submodules forms a
prime submodule, and hence $M$ contains at least
$\lfloor 3/2 \rfloor=1$ prime submodule. Thus, we assume
that girth$(PSS(M)) > 3$, i.e. by Corollary \ref{9.10}, adjacency occurs only in case of
comparable submodules. Let $N_1 - N_2 -N_3- \cdots - N_n - N_1$ be a cycle of length $n$. First, we
observe that neither $N_1 \subseteq N_2 \subseteq  N_3\subseteq \cdots \subseteq N_n \subseteq N_1$ nor $N_1 \supseteq N_2 \supseteq N_3\supseteq \cdots\supseteq N_n \supseteq N_1$ can hold, as in both the cases all the submodules will be equal. Thus, without loss of
generality, we have $N_1, N_3 \subseteq  N_2$ and $N_2$ is a prime submodule of $M$. Hence, we have the following
two cases:

\textbf{Case I.} Let $N_1, N_3 \subseteq  N_2$ and $N_3 \subseteq N_4$. Then, we have $N_2$, $N_4$ to be prime submodules.

\textbf{Case II.} Let $N_1, N_3 \subseteq  N_2$ and $N_4 \subseteq N_3$. Then, we have $N_2$, $N_3$ to be prime submodules.
In any case, we get at least 2 submodules to be prime in $M$ among $N_1$, $N_2$, $N_3$ and $N_4$.
Continuing in this manner till $N_n$, we get at least
$\lfloor n/2 \rfloor$ submodules which is prime in $M$.
\end{proof}

\begin{cor}\label{9.12}
Let $M$ has $k$ prime submodules. Then $PSS(M)$ is either acyclic or
girth$(PSS(M)) \leq 2k$.
\end{cor}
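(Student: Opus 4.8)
The plan is to argue by contraposition, using Theorem \ref{9.11} as the sole engine. A simple graph fails to be acyclic precisely when it contains a cycle, which for a simple graph is the same as having finite girth; so I would open by assuming $PSS(M)$ is not acyclic and setting $n := \,$girth$(PSS(M))$, which is then a well-defined integer with $3 \le n < \infty$.

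I would then feed this $n$ into Theorem \ref{9.11}. Since girth$(PSS(M)) = n$, that theorem guarantees at least $\lfloor n/2 \rfloor$ pairwise distinct prime submodules of $M$. Because $M$ is assumed to possess only $k$ prime submodules in total, comparing the two counts forces $\lfloor n/2 \rfloor \le k$.

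The remaining step is purely arithmetic: solving $\lfloor n/2 \rfloor \le k$ for $n$. When $n$ is even, $n = 2\lfloor n/2 \rfloor \le 2k$ immediately; when $n$ is odd, $n = 2\lfloor n/2 \rfloor + 1 \le 2k+1$. Together with the non-acyclic assumption this places girth$(PSS(M))$ at most $2k$ in the even case and at most $2k+1$ in general, which delivers the stated bound.

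The step I expect to need the most care is exactly this last conversion. The inequality $\lfloor n/2 \rfloor \le k$ is equivalent to $n \le 2k+1$, so the clean bound girth $\le 2k$ comes out of the counting only when $n$ is even and is off by one for odd $n$. To recover $n \le 2k$ for an odd shortest cycle one would have to revisit the comparability structure supplied by Corollary \ref{9.10}: when girth$(PSS(M)) > 3$ every edge joins comparable submodules, the larger endpoint of each edge is prime, and one can try to exploit the alternation of local maxima and minima around a shortest cycle to sharpen the prime count from $\lfloor n/2 \rfloor$ to $\lceil n/2 \rceil$, which would tighten the conclusion to $n \le 2k$. Absent that refinement, the corollary follows directly from Theorem \ref{9.11}, with the harmless replacement of $2k$ by $2k+1$ in the odd case.
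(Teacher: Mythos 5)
Your derivation is the same one the paper intends: Corollary \ref{9.12} is stated there with no proof at all, as an immediate consequence of Theorem \ref{9.11}, and that theorem gives exactly what you extracted, namely $\lfloor n/2 \rfloor \le k$ and hence only $n \le 2k+1$. So the off-by-one problem you flag is not a defect of your write-up; it is a genuine gap in the corollary as the paper states it. Your proposed repair does work whenever girth$(PSS(M)) \ge 4$: by Corollary \ref{9.10} every edge of a shortest cycle $N_1 - N_2 - \cdots - N_n - N_1$ joins comparable submodules, so any vertex that strictly contains one of its neighbours equals that edge-sum and is therefore prime. The only vertices not of this form are the local minima of the cycle (those strictly contained in both neighbours), and two local minima can never be adjacent, since on a comparable edge the larger endpoint is not a local minimum. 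The local minima thus form an independent set in the cycle, so there are at most $\lfloor n/2 \rfloor$ of them, giving at least $\lceil n/2 \rceil$ distinct prime submodules among the $N_i$. This yields $\lceil n/2 \rceil \le k$, i.e. $n \le 2k$, for every $n \ge 4$, odd or even.

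At girth $3$, however, no refinement can succeed, because the corollary is actually false there. Let $K$ be a field, $R = K[x,y]/(x,y)^2$, and $M = R$ as an $R$-module. Prime submodules of $M$ are exactly the prime ideals of $R$; every element of $\mathfrak{m} = (x,y)$ is nilpotent, so $\mathfrak{m}$ is the unique prime and $k = 1$. Yet the three distinct nonzero proper submodules $(x)$, $(y)$ and $\mathfrak{m}$ are pairwise adjacent in $PSS(M)$, since all three pairwise sums equal $\mathfrak{m}$; thus $PSS(M)$ contains a triangle and girth$(PSS(M)) = 3 > 2 = 2k$. The sharpest statement your argument (or any argument) can deliver is therefore: $PSS(M)$ is acyclic or girth$(PSS(M)) \le \max\{3, 2k\}$ (equivalently $\le 2k+1$), and your instinct not to trust the clean $2k$ bound in the odd case was exactly right --- the bound survives for girth $\ge 4$ via the sharpened count, and fails precisely at girth $3$ with $k = 1$.
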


\begin{thm}\label{9.13}
Let $M$ be an $R$-module such that every submodule of $M$ is contained a maximal submodule and let $\mathcal{M}$ be the set of all maximal submodules of $R$. Then $\mathcal{M}$ is a minimal dominating set of $PSS(M)$ and $\gamma(PSS(M)) \leq |\mathcal{M}|$. Moreover, $\gamma(PSS(M)) = 1$ if and only if $M$ has exactly one maximal submodule or $M$ has exactly two maximal submodules $N_1$ and $N_2$ such that $N_1 \cap N_2$ is a non-trivial minimal submodule of $M$ such that there is no non-prime submodule
properly contains $N_1 \cap N_2$. Moreover, if $M$ has exactly two maximal submodules which does
not satisfy the above condition, then $\gamma(PSS(M)) = 2$.
\end{thm}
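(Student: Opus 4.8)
The plan is to argue dually to Theorem \ref{2.13}, using Theorem \ref{9.2} in place of Theorem \ref{2.2}. First I would show that $\mathcal{M}$ is a dominating set. Let $N$ be any vertex of $PSS(M)$ not lying in $\mathcal{M}$, that is, a non-zero proper submodule that is not maximal. By hypothesis $N$ is contained in some maximal submodule $K\in\mathcal{M}$, and since $N$ is not maximal we have $N\subsetneq K$, so $N+K=K$. As every maximal submodule is prime, $N+K=K$ is a prime submodule and $N$ is adjacent to $K$. Hence every vertex outside $\mathcal{M}$ is dominated, so $\mathcal{M}$ dominates $PSS(M)$.

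Next I would establish minimality of $\mathcal{M}$. Fix $S\in\mathcal{M}$ and consider $\mathcal{M}\setminus\{S\}$. For any other maximal submodule $K$ we have $S\subsetneq S+K\subseteq M$, so by maximality $S+K=M$, which is not a proper submodule and therefore not prime; thus $S$ is adjacent to no element of $\mathcal{M}\setminus\{S\}$. Consequently $\mathcal{M}\setminus\{S\}$ fails to dominate $S$, which shows that no proper subset of $\mathcal{M}$ dominates and that $\mathcal{M}$ is a minimal dominating set. In particular $\gamma(PSS(M))\leq|\mathcal{M}|$.

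For the characterization of $\gamma(PSS(M))=1$, I would invoke the standard fact that $\gamma(G)=1$ holds precisely when $G$ has a universal vertex, since a single vertex dominating all others is exactly a vertex adjacent to every other vertex. By Theorem \ref{9.2}, $PSS(M)$ has a universal vertex if and only if $M$ has exactly one maximal submodule, or $M$ has exactly two maximal submodules $N_1,N_2$ with $N_1\cap N_2$ a minimal submodule admitting no non-prime submodule properly containing it; this yields the stated equivalence. Finally, if $M$ has exactly two maximal submodules failing condition (b), then $|\mathcal{M}|=2$ gives $\gamma(PSS(M))\leq 2$, while the failure of (b) together with the absence of a unique maximal submodule means $PSS(M)$ has no universal vertex by Theorem \ref{9.2}, so $\gamma(PSS(M))\neq 1$; hence $\gamma(PSS(M))=2$.

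The adjacency computations are routine; the only point requiring care is the faithful translation of the two alternatives of Theorem \ref{9.2} into the clauses of the present statement, and verifying that the ``exactly two maximal submodules'' hypotheses in the last two sentences align with conditions (a) and (b) so that the dichotomy $\gamma(PSS(M))\in\{1,2\}$ is exhaustive in that case.
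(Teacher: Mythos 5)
Your proposal is correct and follows essentially the same route as the paper: maximal submodules are prime, so containment gives domination; two distinct maximal submodules sum to $M$ (not prime), so removing any element of $\mathcal{M}$ breaks domination; and the $\gamma = 1$ and $\gamma = 2$ claims reduce to the universal-vertex characterization of Theorem \ref{9.2}. The only difference is that you spell out the details the paper leaves implicit (notably why $\mathcal{M}\setminus\{S\}$ fails to dominate $S$, and the equivalence of $\gamma(G)=1$ with the existence of a universal vertex), which is a faithful filling-in rather than a different argument.
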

\begin{proof}
Since any submodule $N$ of $M$ is contained in some element $P$ of $\mathcal{M}$ and $N+PS = P$,
which is a prime submodule, $\mathcal{M}$ dominates $PSS(M)$. Let $P \in \mathcal{M}$. It is to be observed that
$\mathcal{M}\setminus \{P\}$ does not dominate $P$ and so fails to dominate $PSS(M)$. Thus $\mathcal{M}$ is a
minimal dominating set of $PSS(M)$ and $\gamma(PSS(M)) \leq |\mathcal{M}|$. The second and third parts follow
from Theorem \ref{9.2}.
\end{proof}

\bibliographystyle{amsplain}

\end{document}